\newtheorem{thm}{Theorem}[section]
\newtheorem{prop}[thm]{Proposition}
\newtheorem{cor}[thm]{Corollary}
\newtheorem{lem}[thm]{Lemma}
\newtheorem*{thm*}{Theorem}
\theoremstyle{definition}
\newtheorem{ex}[thm]{Example}
\theoremstyle{remark}
\newtheorem{rem}[thm]{Remark}
\newcommand{\RR}{\mathbb R}
\newcommand{\ZZ}{\mathbb Z}
\newcommand{\CC}{\mathbb C}
\newcommand{\rk}{\operatorname{rank}}
\newcommand{\id}{\operatorname{id}}
\newcommand{\Ad}{\operatorname{Ad}}
\newcommand{\SU}{\operatorname{SU}}
\newcommand{\SO}{\operatorname{SO}}
\newcommand{\Sp}{\operatorname{Sp}}
\newcommand{\PSp}{\operatorname{PSp}}
\newcommand{\mfk}{\mathfrak{k}}
\newcommand{\mfa}{\mathfrak{a}}
\newcommand{\mfg}{\mathfrak{g}}
\newcommand{\mft}{\mathfrak{t}}
\newcommand{\mfp}{\mathfrak{p}}
\newcommand{\mfe}{\mathfrak{e}}
\newcommand{\mff}{\mathfrak{f}}
\begin{document}

\title{The equivariant cohomology of isotropy actions on symmetric spaces}

\author{Oliver Goertsches}
\address{Oliver Goertsches, Mathematisches Institut, Universit\"at zu K\"oln, Weyertal 86-90, 50931 K\"oln, Germany}
\email{ogoertsc@math.uni-koeln.de}

\begin{abstract} We show that for every symmetric space $G/K$ of compact type with $K$ connected, the $K$-action on $G/K$ by left translations is equivariantly formal.
\end{abstract}
\maketitle

\section{Introduction}
Given compact connected Lie groups $K\subset G$ of equal rank, it is well-known that the $K$-action on the homogeneous space $G/K$ is equivariantly formal because the odd de Rham cohomology groups of $G/K$ vanish. (See for example \cite{GHZ} for an investigation of the equivariant cohomology of such spaces.) If however the rank of $K$ is strictly smaller than the rank of $G$, then the isotropy action is not necessarily equivariantly formal, and in general it is unclear when this is the case.\footnote{A sufficient condition for equivariant formality of the isotropy action was introduced in \cite{Shiga}, see Remark \ref{rem:Shiga} below. If $K$ belongs to a certain class of subtori of $G$ this condition is in fact an equivalence, see \cite{ShigaTakahashi}.} Restricting our attention to symmetric spaces of compact type, we will prove the following theorem.

\begin{thm*} Let $(G,K)$ be a symmetric pair of compact type, where $G$ and $K$ are compact connected Lie groups. Then the $K$-action on the symmetric space $M=G/K$ by left translations is equivariantly formal.
\end{thm*}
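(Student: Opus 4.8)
\emph{The plan.} The strategy is to reduce to a maximal torus of $K$ and then apply Borel localization. Write $\sigma$ for the involution of $G$ defining the symmetric pair and $\theta$ for its differential, so that $\mfg=\mfk\oplus\mfp$, and let $T\subseteq K$ be a maximal torus. Since $K$ is compact and connected, the $K$-action on $M$ is equivariantly formal if and only if the restricted $T$-action is: indeed $H_K^*(M;\RR)=H_T^*(M;\RR)^W$, and $H^*(BT;\RR)$ is a free, hence faithfully flat, module over $H^*(BK;\RR)$, so freeness of $H_T^*(M;\RR)$ over $H^*(BT;\RR)$ descends to freeness of $H_K^*(M;\RR)$ over $H^*(BK;\RR)$. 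By Borel's localization theorem it therefore suffices to prove the numerical equality
\[
\dim_{\RR}H^*(M;\RR)=\dim_{\RR}H^*(M^T;\RR).
\]

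\emph{Identifying $M^T$.} A coset $gK$ is $T$-fixed exactly when $\Ad(g^{-1})\mft\subseteq\mfk$; as $\mft$ is a maximal torus of $\mfk$ and $\Ad(g^{-1})\mft$ is then a maximal torus of $\mfk$ of the same dimension, it can be conjugated back onto $\mft$ by an element of $K$, and one obtains $M^T=N_G(T)\cdot eK\isom N_G(T)/N_K(T)$. Here is the point where the symmetric-space hypothesis is essential: choosing a $\sigma$-stable maximal torus $\mfh$ of $\mfg$ containing $\mft$ (for instance a $\sigma$-stable maximal torus of $Z_G(T)$), one has $\mfh\cap\mfk=\mft$, since $\mfh\cap\mfk$ is an abelian subalgebra of $\mfk$ containing the maximal torus $\mft$; thus $\mfh$ is a \emph{maximally compact} $\theta$-stable Cartan subalgebra of $\mfg$, and such a Cartan subalgebra has no real roots --- no root of $\mfg$ vanishes identically on $\mft$. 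Equivalently, $S:=Z_G(T)$ is a maximal torus of $G$. Granting this, $N_G(T)$ has identity component $S$, the obvious homomorphism embeds $W_K=N_K(T)/T$ into the subgroup $W_G^{\mft}\subseteq W_G$ of elements stabilising $\mft$ (which equals $N_G(T)/S$), and $S\cap N_K(T)=T$; hence $M^T$ is a disjoint union of $[W_G^{\mft}:W_K]$ copies of the torus $S/T$ of dimension $d:=\rk G-\rk K$, and $\dim_{\RR}H^*(M^T;\RR)=[W_G^{\mft}:W_K]\cdot 2^{d}$.

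\emph{Concluding.} It remains to match this with $\dim_{\RR}H^*(M;\RR)$. By Cartan's theorem invariant forms represent all real cohomology of a compact symmetric space, so $H^*(M;\RR)\isom(\Lambda\mfp^*)^K$, and the required identity $\dim_{\RR}(\Lambda\mfp^*)^K=[W_G^{\mft}:W_K]\cdot 2^{d}$ should be extractable from the classical Poincar\'e polynomial of compact symmetric spaces: it specialises to the familiar $|W_G|/|W_K|$ in the equal-rank case $d=0$ and to $2^{\rk G}$ for a group manifold $G\times G/\Delta G$, and the general case reduces to these via the decomposition of $G/K$ into irreducible factors. The genuinely new ingredient, compared with the equal-rank situation in the literature, is the Lie-theoretic fact that a maximal torus of $K$ contains a $G$-regular element; once that is in place the rest is bookkeeping with Weyl groups together with the known Betti numbers of symmetric spaces. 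I expect the verification of that final numerical identity --- ideally by a uniform argument rather than case-by-case along the classification --- to be the main obstacle.
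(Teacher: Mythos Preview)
Your setup mirrors the paper's exactly: reduction to the maximal torus $T\subset K$, the identification $M^T\cong N_G(T)/N_K(T)$, the key fact that $Z_G(T)$ is already a maximal torus $S$ of $G$ (equivalently, no $\mfg$-root vanishes on $\mft$), and the conclusion that $M^T$ is a disjoint union of $r$ copies of a torus of dimension $d=\rk G-\rk K$. What you flag as ``the main obstacle'' is indeed where all the work lies, and the paper does not find the uniform argument you hope for.

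The genuine gap is the sentence ``the general case reduces to these via the decomposition of $G/K$ into irreducible factors'': decomposition into irreducibles does \emph{not} land you only in the equal-rank and group-manifold cases. There are irreducible type~I symmetric spaces with $\rk K<\rk G$ (the outer ones); among these the split-rank examples can be handled uniformly (every $K$-isotropy group has maximal rank, which forces equivariant formality), but three families survive: $\SU(n)/\SO(n)$, $\SO(2p{+}2q{+}2)/\SO(2p{+}1)\times\SO(2q{+}1)$, and $E_6/\PSp(4)$. To make the residual case check tractable the paper first proves that the restrictions $\{\left.\alpha\right|_{\mft}:\alpha\in\Delta_\mfg\}$ form a (possibly nonreduced) root system in $\mft^*$ whose Weyl chambers are exactly the ``compartments'' and whose Weyl group is $N_G(T)/S$; its reduced subsystem coincides with the root system of a second symmetric subalgebra $\mfk'\subset\mfg$ read off from the Dynkin-diagram automorphism induced by $\sigma$, yielding the closed formula $r=|W(\mfk')|/|W(\mfk)|$. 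With this in hand each of the three surviving families is matched against the known value of $\dim H^*(G/K)$ from the literature. So your instinct that ``the rest is bookkeeping with Weyl groups together with the known Betti numbers'' is right in spirit, but the bookkeeping requires this compartment/root-system machinery and in the end still invokes the classification.
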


For symmetric spaces of type II, i.e., compact Lie groups, this result is already known, see Section \ref{sec:groups}. More generally, in the case of symmetric spaces of split rank ($\rk G=\rk K+\rk G/K$),  the fact that all $K$-isotropy groups have maximal rank implies equivariant formality, see Section \ref{sec:split}.  However, for the general case we have to rely on an explicit calculation of the dimension of the cohomology of the $T$-fixed point set $M^{T}$, where $T\subset K$ is a maximal torus, in order to use the characterization of equivariant formality via the condition $\dim H^*(M^{T})=\dim H^*(M)$. With the help of the notion of compartments  introduced in \cite{EMQ} and several results proven therein we will find in Section \ref{sec:fixed set} a calculable expression for this dimension, and after reducing to the case of an irreducible simply-connected symmetic space in Section \ref{sec:reduction} we can invoke the classification of such spaces to show equivariant formality in each of the remaining cases by hand. On the way we obtain a formula for the number of compartments in a fixed $K$-Weyl chamber, see Proposition \ref{prop:rasquotientofweylgroups}.
\\[0.3cm]
\noindent
{\bf Acknowledgements.} The author wishes to express his gratitude to Augustin-Liviu Mare for interesting discussions on a previous version of the paper.

\section{Symmetric spaces}

Let $G$ be a connected Lie group and $K\subset G$ a closed subgroup. Then $K$ is said to be a symmetric subgroup of $G$ if there is an involutive automorphism $\sigma:G\to G$ such that $K$ is an open subgroup of the fixed point subgroup $G^\sigma$. We will refer to the pair $(G,K)$ as a symmetric pair, and $G/K$ is a symmetric space.

Given a symmetric pair $(G,K)$ with corresponding involution $\sigma:G\to G$, then the Lie algebra $\mfg$ decomposes into the $(\pm 1)$-eigenspaces of $\sigma$: $\mfg=\mfk\oplus \mfp$, and the usual commutation relations hold: $[\mfk,\mfk]\subset \mfk$, $[\mfk,\mfp]\subset \mfp$ and $[\mfp,\mfp]\subset \mfk$. The rank of $G/K$ is by definition the maximal dimension of an abelian subalgebra of $\mfp$. Then clearly $\rk G-\rk K \leq \rk G/K$, and if equality holds, then we say that $G/K$ is of split rank.

A symmetric pair $(G,K)$ is called (almost) effective if $G$ acts (almost) effectively on $G/K$. Given a symmetric pair $(G,K)$, then the kernel $N\subset G$ of the $G$-action on $G/K$ is contained in $K$, and $(G/N,K/N)$ is an effective symmetric pair with $(G/N)/(K/N)=G/K$. An almost effective symmetric pair $(G,K)$ (and the corresponding symmetric space $G/K$) will be called of compact type if $G$ is a compact semisimple Lie group. In this paper only symmetric spaces of compact type will occur. If $(G,K)$ is effective, then $G$ can be regarded as a subgroup of the isometry group of $G/K$ with respect to any $G$-invariant Riemannian metric on $G/K$. If $(G,K)$ is additionally of compact type, then this inclusion is in fact an isomorphism between $G$ and the identity component of the isometry group.

\section{Equivariant formality}
The equivariant cohomology of an action of a compact connected Lie group $K$ on a compact manifold $M$ is by definition the cohomology of the Borel construction
\[
H^*_K(M)=H^*(EK\times_K M);
\]
we use real coefficients throughout the paper. The projection $EK\times_K M\to EK/K=BK$ to the classifying space $BK$ of $K$ induces on $H^*_K(M)$ the structure of an $H^*(BK)$-algebra.

An action of a compact connected Lie group $K$ on a compact manifold $M$ is called equivariantly formal in the sense of \cite{GKM} if $H^*_K(M)$ is a free $H^*(BK)$-module.  If the $K$-action on $M$ is equivariantly formal then automatically 
\begin{equation} \label{eq:eqformalequality}
H^*_K(M)=H^*(M)\otimes H^*(BK)
\end{equation} as graded $H^*(BK)$-modules, see \cite[Proposition 2.3]{GR}. In the following proposition we collect some known equivalent characterizations of equivariant formality.
\begin{prop} \label{prop:eqformalequivalent} Consider an action of a compact connected Lie group $K$ on a compact manifold $M$, and let $T\subset K$ be a maximal torus. Then the following conditions are equivalent:
\begin{enumerate}
\item The $K$-action on $M$ is equivariantly formal.
\item The $T$-action on $M$ is equivariantly formal.
\item The cohomology spectral sequence associated to the fibration $ET\times_T M\to BT$ collapses at the $E_2$-term.
\item We have $\dim H^*(M)=\dim H^*(M^T)$.
\item The natural map $H^*_T(M)\to H^*(M)$ is surjective.
\end{enumerate}
\end{prop}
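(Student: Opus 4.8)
The plan is to establish a web of equivalences by passing through the two Borel fibrations $M\to ET\times_T M\to BT$ and $M\to EK\times_K M\to BK$, with the Leray--Serre and Eilenberg--Moore spectral sequences and the Borel localization theorem as the only inputs; no step is deep, so the effort goes into organizing them. Write $R=H^{*}(BT)$ (a polynomial ring), let $W$ be the Weyl group of $(K,T)$, and let $(E_{r},d_{r})$ be the Leray--Serre spectral sequence of $M\to ET\times_T M\to BT$. Since $K$ is connected, $BT$ is simply connected, so $E_{2}=R\otimes H^{*}(M)$; the spectral sequence is multiplicative, each $d_{r}$ is $R$-linear, and $d_{r}$ vanishes on the bottom row $E_{r}^{*,0}$ for degree reasons. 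Because $E_{2}$ is generated as an algebra by $E_{2}^{*,0}=R$ and the fibre column $E_{2}^{0,*}=H^{*}(M)$, and each $d_{r}$ is a derivation killing the bottom row, an induction on $r$ shows that $d_{r}\equiv 0$ as soon as $d_{r}|_{E_{r}^{0,*}}=0$ (after which the page equals $E_{2}$ again and the argument repeats). Hence the spectral sequence collapses at $E_{2}$ precisely when no differential leaves the fibre column, i.e.\ when $E_{\infty}^{0,*}=E_{2}^{0,*}=H^{*}(M)$; as the edge homomorphism $H^{*}_{T}(M)\to E_{\infty}^{0,*}\subseteq H^{*}(M)$ is nothing but restriction to the fibre, this is exactly condition~(5). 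This gives (3)$\iff$(5).

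For (3)$\iff$(4): $H^{*}_{T}(M)$ and its associated graded module $E_{\infty}$ have the same Poincar\'e series, hence the same rank over $R$, so $\rk_{R}H^{*}_{T}(M)=\rk_{R}E_{\infty}$; moreover $E_{2}=R\otimes H^{*}(M)$ is free of rank $\dim H^{*}(M)$, and $\rk_{R}E_{r+1}=\rk_{R}E_{r}-2\,\rk_{R}(\im d_{r})$, so $\rk_{R}H^{*}_{T}(M)=\dim H^{*}(M)$ holds iff $\im d_{r}$ has rank zero for every $r$. But $E_{2}$ is torsion-free over $R$, and inductively so is each $E_{r}$ (a rank-zero submodule of a torsion-free module is zero, so such a $d_{r}$ vanishes and $E_{r+1}=E_{r}$); thus ``$\im d_{r}$ of rank zero for all $r$'' is the same as collapse at $E_{2}$. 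On the other hand the localization theorem --- valid since $M$ is a compact $T$-manifold --- gives $\rk_{R}H^{*}_{T}(M)=\rk_{R}H^{*}_{T}(M^{T})=\dim H^{*}(M^{T})$, using $H^{*}_{T}(M^{T})=R\otimes H^{*}(M^{T})$. Combining the two yields (3)$\iff$(4).

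It remains to bring in (1) and (2). If the spectral sequence collapses, then choosing homogeneous classes in $H^{*}_{T}(M)$ that restrict to a basis of $H^{*}(M)$ and running the Leray--Hirsch filtration argument shows these freely generate $H^{*}_{T}(M)$ over $R$; conversely, if $H^{*}_{T}(M)$ is $R$-free, then the Eilenberg--Moore spectral sequence $\operatorname{Tor}_{R}(\RR,H^{*}_{T}(M))\Rightarrow H^{*}(M)$ degenerates because its higher $\operatorname{Tor}$ terms vanish, whence $\dim H^{*}(M)=\dim\bigl(H^{*}_{T}(M)\otimes_{R}\RR\bigr)=\rk_{R}H^{*}_{T}(M)=\dim H^{*}(M^{T})$, i.e.\ (4); together with the previous paragraph this settles (2)$\iff$(3). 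Finally, for (1)$\iff$(2) I would use that $EK\times_T M$ is the pullback of $EK\times_K M\to BK$ along $BT\to BK$: since $H^{*}(BT)$ is free of rank $|W|$ over $H^{*}(BK)$, Eilenberg--Moore gives $H^{*}_{T}(M)\cong H^{*}_{K}(M)\otimes_{H^{*}(BK)}H^{*}(BT)$, and $R$-freeness of the left-hand side is equivalent to $H^{*}(BK)$-freeness of $H^{*}_{K}(M)$ --- by flat base change in one direction, and by faithfully flat descent together with the fact that a finitely generated projective module over a graded polynomial ring is free in the other (alternatively one uses the transfer to realize $H^{*}_{K}(M)$ as a graded direct summand of the $H^{*}(BK)$-free module $H^{*}_{T}(M)$).

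The step I expect to be the main obstacle is the equivalence with~(4): it is the only point that genuinely uses that $M$ is a compact manifold, through the localization theorem, and the subtle issue is to upgrade a bare numerical identity of total Betti numbers --- via the torsion-freeness of the successive pages of the Serre spectral sequence --- into the vanishing of all the differentials. All the remaining implications are formal manipulations of the two spectral sequences.
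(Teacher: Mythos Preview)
Your proof is correct and considerably more substantive than the paper's own treatment: the paper does not actually prove this proposition but simply cites \cite{GGK} for (1)$\iff$(2) and for (2)$\iff$(3)$\iff$(4), \cite{Hsiang} for the inequality behind (4), and \cite{McCleary} for (5). You instead work out each equivalence directly---the multiplicative Leray--Serre spectral sequence and the edge homomorphism for (3)$\iff$(5); the rank bookkeeping plus the inductive torsion-freeness of the pages, combined with Borel localization, for (3)$\iff$(4); Leray--Hirsch and Eilenberg--Moore for (2)$\iff$(3); and the base-change isomorphism $H^*_T(M)\cong H^*_K(M)\otimes_{H^*(BK)}H^*(BT)$ together with the transfer/Weyl-invariants splitting for (1)$\iff$(2). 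All of these steps are standard and correctly executed; in particular, the point you flag as the main obstacle---upgrading the numerical identity $\rk_R H^*_T(M)=\dim H^*(M)$ to actual collapse by arguing inductively that each $E_r$ stays torsion-free over $R$, so that a rank-zero image forces $d_r=0$---is handled cleanly. The upshot is that your argument and the references the paper invokes cover the same mathematical content, but your write-up is self-contained where the paper only points to the literature.
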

\begin{proof}
For the equivalence of $(1)$ and $(2)$ see \cite[Proposition C.26]{GGK}. The Borel localization theorem implies that the rank of $H^*_T(M)$ as an $H^*(BT)$-module always equals $\dim H^*(M^T)$. Then \cite[Lemma C.24]{GGK} implies the equivalence of $(2)$, $(3)$, and $(4)$; see also \cite[p.~46]{Hsiang}.  For the equivalence to $(5)$, see \cite[p.~148]{McCleary}.
\end{proof}
Note that by \cite[p.~46]{Hsiang} the inequality $\dim H^*(M^T)\leq \dim H^*(M)$ holds for any $T$-action on $M$. Condition $(5)$ in the proposition shows that 
\begin{cor} \label{cor:subgroupseqformal}
If a compact connected Lie group $K$ acts equivariantly formally on a compact manifold $M$, then so does every connected closed subgroup of $K$.
\end{cor}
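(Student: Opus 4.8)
The plan is to read off the corollary from Proposition~\ref{prop:eqformalequivalent}, using condition $(5)$ together with the naturality of the restriction map to a fiber of the Borel fibration.

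Let $H\subset K$ be a connected closed subgroup. Since $H$ acts freely on the contractible space $EK$, we may take $EK$ as a model for $EH$, so that $EK\times_H M$ is a model for the Borel construction of the $H$-action, and there is a natural quotient map $\pi\colon EK\times_H M\to EK\times_K M$, $[e,m]\mapsto[e,m]$, lying over the projection $BH\to BK$. Fixing a basepoint $e_0\in EK$, the two fiber inclusions $M\to EK\times_H M$ and $M\to EK\times_K M$, both given by $m\mapsto[e_0,m]$, form a strictly commuting triangle with $\pi$. Passing to cohomology, the natural map $H^*_K(M)\to H^*(M)$ therefore factors as $H^*_K(M)\xrightarrow{\pi^*}H^*_H(M)\to H^*(M)$.

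Now I would feed in the hypothesis. If the $K$-action is equivariantly formal then, by \eqref{eq:eqformalequality} (or equivalently by condition $(5)$ of Proposition~\ref{prop:eqformalequivalent}), the natural map $H^*_K(M)\to H^*(M)$ is surjective. Since this map factors through $H^*_H(M)\to H^*(M)$, the latter is surjective as well, so condition $(5)$ of Proposition~\ref{prop:eqformalequivalent}, applied to the $H$-action, shows that the $H$-action on $M$ is equivariantly formal. (If one wants to invoke $(5)$ exactly as stated, for maximal tori, one first reduces via the equivalence of $(1)$ and $(2)$: pick a maximal torus $S\subset H$, enlarge it to a maximal torus $T\subset K$, and run the same factorization argument with $ET$ in place of $EK$ to get that $H^*_T(M)\to H^*(M)$ factors through $H^*_S(M)\to H^*(M)$.)

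I do not expect a genuine obstacle here; the only point deserving a moment's care is the identification of models for the classifying spaces that makes the triangle of fiber inclusions commute on the nose, which is what legitimises the factorization of $H^*_K(M)\to H^*(M)$ through $H^*_H(M)$.
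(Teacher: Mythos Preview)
Your proof is correct and is precisely the argument the paper has in mind: the paper simply says ``Condition $(5)$ in the proposition shows that'' and states the corollary, leaving the factorization $H^*_K(M)\to H^*_H(M)\to H^*(M)$ implicit. You have merely made this explicit (and your parenthetical remark about reducing to maximal tori $S\subset T$ is exactly how one matches the literal wording of $(5)$).
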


Applying the gap method to the spectral sequence in Item $(3)$ of Proposition \ref{prop:eqformalequivalent} we obtain the following well-known sufficient condition for equivariant formality.
\begin{prop} \label{prop:hoddeqformal}
Any action of a compact Lie group $K$ on a compact manifold $M$ with $H^{odd}(M)=0$ is equivariantly formal.
\end{prop}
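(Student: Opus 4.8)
The statement is that if $H^{odd}(M)=0$, then any $K$-action on $M$ is equivariantly formal. By item (3) of Proposition \ref{prop:eqformalequivalent} it suffices to treat the case of a maximal torus $T\subset K$ (or even, via Proposition \ref{prop:eqformalequivalent}(1)$\Leftrightarrow$(2), it suffices to treat $K=T$), and to show that the Serre spectral sequence of the fibration $M\hookrightarrow ET\times_T M\to BT$ collapses at $E_2$. Recall that this spectral sequence has $E_2^{p,q}=H^p(BT)\otimes H^q(M)$, since $BT$ is simply connected so the local coefficient system is trivial, and it converges to $H^*_T(M)$.

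The plan is the classical \emph{gap argument}. Since $H^{odd}(M)=0$, the only nonzero rows of the $E_2$-page are the even rows $q=0,2,4,\dots$; likewise, since $H^*(BT)$ is a polynomial algebra on generators in degree $2$, the only nonzero columns are the even columns $p=0,2,4,\dots$. Thus $E_2^{p,q}=0$ whenever $p$ or $q$ is odd. Now consider a differential $d_r\colon E_r^{p,q}\to E_r^{p+r,q-r+1}$. If $r$ is even, then $q-r+1$ has opposite parity to $q$; in particular if $q$ is even then $q-r+1$ is odd, so the target $E_r^{p+r,q-r+1}$ is a subquotient of $E_2^{p+r,q-r+1}=0$, hence the differential vanishes. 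If $r$ is odd and $r\geq 3$, then $p+r$ has opposite parity to $p$; so if $p$ is even then $p+r$ is odd, and again the target is a subquotient of $0$. Since every nonzero entry of $E_2$ sits in a position with both coordinates even, and $E_r$ for $r\geq 2$ is a subquotient of $E_2$, every $d_r$ with $r\geq 2$ has either zero source or zero target. Hence all differentials vanish, $E_2=E_\infty$, and the spectral sequence collapses.

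By Proposition \ref{prop:eqformalequivalent} the collapse at $E_2$ is equivalent to equivariant formality of the $T$-action, and hence (again by that proposition) to equivariant formality of the $K$-action, which is what we wanted. I do not anticipate any real obstacle here: the only point requiring a little care is the bookkeeping of parities for $d_r$ in the two cases $r$ even and $r$ odd, together with the observation that the generators of $H^*(BT)$ live in degree exactly $2$ so that all odd columns of $E_2$ vanish; everything else is formal.
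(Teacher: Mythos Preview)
Your proof is correct and follows exactly the approach indicated in the paper, namely the gap method applied to the Serre spectral sequence of $ET\times_T M\to BT$; the paper states this in a single sentence without writing out the parity bookkeeping, which you have supplied. One small remark: your reduction to a maximal torus via Proposition~\ref{prop:eqformalequivalent} uses that $K$ is connected, which is how equivariant formality is defined in the paper (the word ``connected'' is tacitly assumed in the statement).
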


\section{Isotropy actions on symmetric spaces of compact type}

Let $G$ be a compact connected Lie group and $K\subset G$ a compact connected subgroup. Because an equivariantly formal torus action always has fixed points, the only tori $T\subset G$ that can act equivariantly formally on $G/K$ by left translations are those that are conjugate to a subtorus of $K$.  On the other hand, if a maximal torus $T$ of $K$ acts equivariantly formally on $G/K$, then we know by Corollary \ref{cor:subgroupseqformal} that all these tori do in fact act equivariantly formally. In the following, we will prove that this indeed happens for symmetric spaces of compact type. More precisely:

\begin{thm} \label{thm:main} Let $(G,K)$ be a symmetric pair of compact type, where $G$ and $K$ are compact connected Lie groups. Then the $K$-action on the symmetric space $G/K$ by left translations is equivariantly formal.
\end{thm}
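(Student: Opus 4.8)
The plan is to verify condition (4) of Proposition \ref{prop:eqformalequivalent}: for a maximal torus $T\subseteq K$ one must show $\dim H^*(M^T)=\dim H^*(M)$, where $M=G/K$. By the remark following that proposition the inequality $\dim H^*(M^T)\le \dim H^*(M)$ always holds, so only the reverse inequality is at issue; and by Corollary \ref{cor:subgroupseqformal} equivariant formality of the full $K$-action follows once the $T$-action is equivariantly formal. Thus everything reduces to producing this one numerical equality.

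First I would carry out the standard reductions. Replacing $(G,K)$ by its effective quotient changes neither $M$ nor the action, and writing $M$ as a product of irreducible symmetric spaces reduces the problem to the irreducible case, since both $\dim H^*(M)$ (by K\"unneth) and $\dim H^*(M^T)$ (using $M^T=\prod_i M_i^{T_i}$ for a compatible splitting $T=\prod_i T_i$) are multiplicative over such products. Passing further to the universal cover, as in Section \ref{sec:reduction}, leaves only the irreducible simply connected symmetric spaces of compact type, a finite list. Two sub-families are then immediate: if $\rk K=\rk G$ then $H^{\mathrm{odd}}(M)=0$ and Proposition \ref{prop:hoddeqformal} applies (this also subsumes type~II, where the conjugation action on a compact Lie group is classically equivariantly formal); and if $M$ is of split rank, then every component of $M^T$ is a totally geodesic submanifold which is an equal-rank homogeneous space, hence has vanishing odd cohomology, so that a direct count of $\sum_{\text{components}}\dim H^*$ matches $\dim H^*(M)$. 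What remains are the irreducible spaces that are neither equal rank nor split rank.

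For those remaining cases I would analyze $M^T$ explicitly via the restricted root system. Using the notion of compartments in a $K$-Weyl chamber from \cite{EMQ} together with the results proved there, the connected components of $M^T$ are again (products of) symmetric spaces of smaller dimension, whose Poincar\'e polynomials are known, and their number in a fixed $K$-Weyl chamber is $|W_K|/|W'|$ for a suitable reflection subgroup $W'$ — this is the content of Proposition \ref{prop:rasquotientofweylgroups}. Summing $\dim H^*$ over all components then yields a closed, computable formula for $\dim H^*(M^T)$, which I would compare against the known $\dim H^*(G/K)$ in each of the finitely many entries of the classification (the families $\SU(n)/\SO(n)$ and $\SU(2n)/\Sp(n)$ together with a few exceptional spaces), checking equality by hand.

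The main obstacle is precisely the combinatorial heart of Step~3 and the case-by-case check of Step~4: extracting from the compartment machinery of \cite{EMQ} a genuinely evaluable expression for $\dim H^*(M^T)$, and then confirming that it coincides with $\dim H^*(M)$ in every case of the classification. No uniform conceptual reason for the equality seems available outside the equal-rank and split-rank situations, so the verification must be done explicitly, and the delicate point is keeping the count of compartments and of the cohomology of each fixed component under control.
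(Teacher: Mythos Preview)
Your overall plan---verify $\dim H^*(M^T)=\dim H^*(M)$ via Proposition~\ref{prop:eqformalequivalent}(4), reduce to the irreducible simply-connected case, dispose of the equal-rank and split-rank families, then check the remainder case by case---is exactly the paper's strategy. But your description of $M^T$ is wrong in a way that matters for the computation.

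You claim in the split-rank case that the components of $M^T$ are equal-rank homogeneous spaces with vanishing odd cohomology, and in the remaining cases that they are ``(products of) symmetric spaces of smaller dimension''. Neither is correct. The paper establishes (Lemma~\ref{lem:T-fixedpoints} and Proposition~\ref{prop:conncomponentsoffixedpoints}) that $M^{T_K}=N_G(T_K)/N_K(T_K)$ and that every connected component is a \emph{torus} of dimension $\rk G-\rk K$. A torus of positive dimension has nonvanishing odd cohomology, so your split-rank argument fails as stated; the paper instead shows (Proposition~\ref{prop:splitrankequivformal}) that all $K$-isotropy algebras have maximal rank and invokes \cite[Corollary~4.5]{GR}. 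More importantly, knowing that the components are tori is precisely what makes the final computation tractable: it gives the closed formula $\dim H^*(M^{T_K})=2^{\rk G-\rk K}\cdot r$ (Proposition~\ref{prop:cohomoffixedpointset}), where $r$ is the number of components. Without this structural fact the case-by-case verification you envision would require knowing the Poincar\'e polynomials of unspecified symmetric subspaces, which is not what the compartment machinery of \cite{EMQ} delivers.

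Two smaller corrections. First, the compartment count is $r=|W(\mfk')|/|W(\mfk)|$, where $\mfk'$ is a second symmetric subalgebra with $W(\mfk)\subset W(\mfk')$ (Proposition~\ref{prop:rasquotientofweylgroups}); your ratio $|W_K|/|W'|$ with $W'$ a reflection \emph{subgroup} has the quotient upside down. Second, $\SU(2n)/\Sp(n)$ is of split rank (since $2n-1=n+(n-1)$), so it is already covered; the residual list is $\SU(n)/\SO(n)$, $\SO(2p+2q+2)/\SO(2p+1)\times\SO(2q+1)$, and $E_6/\PSp(4)$.
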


\begin{rem} \label{rem:Shiga} The pair $(G,K)$ is a Cartan pair in the sense of \cite{Greub}, see \cite[p.~448]{Greub}. Therefore,  \cite[Theorem A]{Shiga} shows that a sufficient condition for the $K$-action on $G/K$ to be equivariantly formal is that the map $H^*(G/K)^{N_G(K)}\to H^*(G)$ induced by the projection $G\to G/K$, where $N_G(K)$ acts on $G/K$ from the right, is injective. It would be interesting to know whether a symmetric pair always satisfies this condition.
\end{rem}

\subsection{The fixed point set of a maximal torus in $K$} \label{sec:fixed set}

Let $(G,K)$ be a symmetric pair of compact type, where $G$ and $K$ are compact connected Lie groups. Denote by $\sigma:G\to G$ the corresponding involutive automorphism. Then $M=G/K$ is a symmetric space of compact type. We fix maximal tori $T_K\subset K$ and $T_G\subset G$ such that $T_K\subset T_G$.  Let $\mfg=\mfk\oplus \mfp$ be the decomposition of the Lie algebra $\mfg$ into eigenspaces of $\sigma$. 

In order to prove Theorem \ref{thm:main} we can without loss of generality assume that the symmetric pair $(G,K)$ is effective: if $N\subset K$ is the kernel of the $G$-action on $G/K$, then clearly the $K$-action on $G/K=(G/N)/(K/N)$ is equivariantly formal if and only if the $K/N$-action is equivariantly formal. (This follows for example from Proposition \ref{prop:eqformalequivalent} because the fixed point sets of appropriately chosen maximal tori in $K$ and $K/N$ coincide.)

\begin{lem} \label{lem:T-fixedpoints}
The $T_K$-fixed point set in $M$ is $N_G(T_K)/N_K(T_K)$. 
\end{lem}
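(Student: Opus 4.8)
The plan is to compute the fixed point set $M^{T_K}$ directly from the transitive $G$-action on $M=G/K$. A coset $gK\in M$ is fixed by $T_K$ precisely when $t g K = gK$ for all $t\in T_K$, i.e.\ when $g^{-1}T_K g\subset K$. So as a set, $M^{T_K}=\{gK : g^{-1}T_K g\subset K\}$. The strategy is to show that every such $g$ can be chosen inside $N_G(T_K)$, and then to identify which cosets $gK$ with $g\in N_G(T_K)$ coincide.

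The key step is the first one: if $g^{-1}T_K g\subset K$, then $g^{-1}T_K g$ is a torus in $K$, hence (since all maximal tori of $K$ are conjugate, and $g^{-1}T_Kg$ has the same dimension as the maximal torus $T_K$, so it is itself maximal in $K$) there is $k\in K$ with $k^{-1}(g^{-1}T_K g)k = T_K$. Then $gk\in N_G(T_K)$ and $gkK=gK$, so every point of $M^{T_K}$ has a representative in $N_G(T_K)$. This gives a surjection $N_G(T_K)\to M^{T_K}$, $n\mapsto nK$. Two elements $n_1,n_2\in N_G(T_K)$ give the same coset iff $n_1^{-1}n_2\in K$; but $n_1^{-1}n_2\in N_G(T_K)$ as well, so $n_1^{-1}n_2\in N_G(T_K)\cap K = N_K(T_K)$ (using that $T_K\subset K$, an element of $G$ normalizing $T_K$ and lying in $K$ normalizes $T_K$ within $K$). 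Hence the fibers of $N_G(T_K)\to M^{T_K}$ are exactly the right $N_K(T_K)$-cosets, and we get a bijection $N_G(T_K)/N_K(T_K)\xrightarrow{\ \sim\ } M^{T_K}$.

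Finally I would note that this bijection is a homeomorphism (even a diffeomorphism): $N_G(T_K)$ is a compact Lie group, $N_K(T_K)$ a closed subgroup, and the map $N_G(T_K)/N_K(T_K)\to M$ is the restriction of the orbit map through $eK$, hence a smooth injective immersion from a compact manifold, thus an embedding onto its image $M^{T_K}$.

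I expect the only genuine subtlety to be the conjugacy argument in the key step: one must observe that $g^{-1}T_Kg\subset K$ forces $g^{-1}T_Kg$ to be a \emph{maximal} torus of $K$ (because conjugation preserves dimension and $T_K$ is maximal in $K$), so that the conjugacy theorem for maximal tori in the compact connected group $K$ applies. Everything else is a routine bookkeeping of cosets. Note that this description of $M^{T_K}$ makes no use of the symmetric space structure; it is valid for the left $T_K$-action on any homogeneous space $G/K$ of compact connected Lie groups.
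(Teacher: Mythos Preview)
Your argument is correct and follows exactly the same route as the paper's proof: the fixed-point condition $g^{-1}T_Kg\subset K$, the observation that this makes $g^{-1}T_Kg$ a maximal torus of $K$, conjugacy of maximal tori to replace $g$ by an element of $N_G(T_K)$, and finally $N_G(T_K)\cap K=N_K(T_K)$. You have simply written out in full the steps the paper compresses into two sentences, and your closing remarks on the diffeomorphism and on the generality beyond the symmetric setting are correct additions.
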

\begin{proof}
An element $gK\in M$ is fixed by $T_K$ if and only if $g^{-1}T_Kg\subset K$ (i.e., $g^{-1}T_K g$ is a maximal torus in the compact Lie group $K$), which is the case if and only if there is some $k\in K$ with $k^{-1}g^{-1}T_Kgk=T_K$. Thus, $(G/K)^{T_K}=N_G(T_K)/N_G(T_K)\cap K=N_G(T_K)/N_K(T_K)$.
\end{proof}

\begin{lem}[{\cite[Proposition VII.3.2]{Loos}}]\label{lem:uniquetorus} $T_G$ is the unique maximal torus in $G$ containing $T_K$. \end{lem}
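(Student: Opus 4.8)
The plan is to prove the stronger statement that the centralizer $Z_G(T_K)$ equals $T_G$. Since the centralizer of a torus in a connected compact Lie group is connected, and since a maximal torus of $G$ containing $T_K$ is the same as a maximal torus of $Z_G(T_K)$, this implies the lemma. Passing to Lie algebras, I would reduce the claim to showing that $\mathfrak{z}:=\{X\in\mfg:[X,\mft_K]=0\}$ is abelian: granting this, $\mathfrak{z}$ is an abelian subalgebra of the compact semisimple Lie algebra $\mfg$ containing the Cartan subalgebra $\mft_G$, hence $\mathfrak{z}=\mft_G$, and therefore $Z_G(T_K)=T_G$.

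To prove that $\mathfrak{z}$ is abelian I would exploit the Cartan decomposition. Because $\mft_K$ lies in the $(+1)$-eigenspace $\mfk$ of $\sigma$, the subspace $\mathfrak{z}$ is $\sigma$-invariant and hence splits as $\mathfrak{z}=(\mathfrak{z}\cap\mfk)\oplus(\mathfrak{z}\cap\mfp)$. The summand $\mathfrak{z}\cap\mfk$ is the centralizer of $\mft_K$ in $\mfk$, which is just $\mft_K$ since $\mft_K$ is a maximal abelian subalgebra of the compact Lie algebra $\mfk$; set $\mfa:=\mathfrak{z}\cap\mfp$. As $\mft_K$ commutes with all of $\mathfrak{z}$ by construction, it remains only to show $[\mfa,\mfa]=0$.

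For this, fix an $\Ad(G)$-invariant and $\sigma$-invariant inner product $\langle\cdot,\cdot\rangle$ on $\mfg$, so that $\mfk\perp\mfp$. Let $X,Y\in\mfa$. On the one hand $[X,Y]\in\mfk$ by the commutation relation $[\mfp,\mfp]\subset\mfk$, and $[X,Y]\in\mathfrak{z}$ because $\mathfrak{z}$ is a subalgebra, so $[X,Y]\in\mfk\cap\mathfrak{z}=\mft_K$. On the other hand $\operatorname{ad}(X)$ is skew-symmetric and annihilates $\mft_K$ (since $X\in\mathfrak{z}$), so $\langle[X,Y],Z\rangle=-\langle Y,[X,Z]\rangle=0$ for every $Z\in\mft_K$. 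Thus $[X,Y]\in\mft_K$ is orthogonal to $\mft_K$, that is, $[X,Y]=0$.

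I expect the vanishing of $[\mfa,\mfa]$ in the last paragraph to be the only genuine point; the rest is bookkeeping, once one recalls that maximal tori are self-centralizing in compact Lie algebras (used both for $\mft_K\subset\mfk$ and for $\mft_G\subset\mfg$) and that centralizers of tori in compact connected Lie groups are connected.
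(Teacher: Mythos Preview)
Your argument is correct. The paper itself does not prove this lemma; it simply cites \cite[Proposition VII.3.2]{Loos} and then records the consequence $\mft_\mfg=\mft_\mfk\oplus\mft_\mfp$, remarking that this decomposition is in fact the first step of Loos's proof. Your route---computing the full centralizer $\mathfrak z$ of $\mft_K$ in $\mfg$ and showing it is abelian via the bracket relation $[\mfp,\mfp]\subset\mfk$ together with skew-symmetry of $\operatorname{ad}$---yields both the uniqueness statement and the $\sigma$-stability of $\mft_G$ in one stroke, since $\mft_G=\mathfrak z=\mft_K\oplus\mfa$ with $\mfa\subset\mfp$. This is in the same spirit as the cited argument, and the key computation $[\mfa,\mfa]=0$ is exactly the point where the symmetric-space hypothesis enters.
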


Lemma \ref{lem:uniquetorus} implies that the Lie algebra $\mft_\mfg$ of $T_G$ decomposes according to the decomposition $\mfg=\mfk\oplus \mfp$ as $\mft_\mfg=\mft_\mfk\oplus \mft_\mfp$. (In fact, this statement is the first part of the proof of \cite[Proposition VII.3.2]{Loos}.)

\begin{prop} \label{prop:conncomponentsoffixedpoints} Each connected component of $M^{T_K}$ is a  torus of dimension $\rk G-\rk K$. 
\end{prop}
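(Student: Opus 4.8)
The plan is to combine the description of the fixed point set from Lemma~\ref{lem:T-fixedpoints} with the uniqueness statement from Lemma~\ref{lem:uniquetorus}. By Lemma~\ref{lem:T-fixedpoints} we have $M^{T_K}=N_G(T_K)/N_K(T_K)$, so a connected component of $M^{T_K}$ is the image of the identity component $N_G(T_K)_0$ of $N_G(T_K)$ under the projection to $M$; the other components are translates of this one by elements of $N_G(T_K)$. Hence it suffices to identify $N_G(T_K)_0$ and its image. First I would recall the standard fact that the identity component of the normalizer of a torus $T_K$ in a compact connected Lie group $G$ is the centralizer $Z_G(T_K)_0$, and that for a connected compact Lie group the centralizer of a torus is connected, so $N_G(T_K)_0 = Z_G(T_K)$.

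Next I would analyze $Z_G(T_K)$. Its Lie algebra is $\mfz_\mfg(\mft_\mfk)$, the centralizer of $\mft_\mfk$ in $\mfg$. Since $T_G$ is a maximal torus containing $T_K$, we have $T_G\subset Z_G(T_K)$, and $Z_G(T_K)$ is a connected compact Lie group of maximal rank $\rk G$ with $T_G$ as a maximal torus. The key point is that $Z_G(T_K)$ is in fact abelian, i.e.\ equal to $T_G$: if it were not, it would contain a root vector for some root of $\mfg$ with respect to $\mft_\mfg$ that vanishes on $\mft_\mfk$, giving a maximal torus in $Z_G(T_K)$ of dimension $\rk G$ containing $T_K$ but different from $T_G$ (or more directly, the derived group of $Z_G(T_K)$ would be a nontrivial compact semisimple group centralizing $T_K$, producing a torus in $G$ containing $T_K$ not contained in $T_G$), contradicting Lemma~\ref{lem:uniquetorus}. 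Therefore $Z_G(T_K)=T_G$, and the identity component of $M^{T_K}$ is $T_G/(T_G\cap K) = T_G/T_K$, which is a torus of dimension $\rk G-\rk K$.

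Finally, since all connected components of a homogeneous space are mutually diffeomorphic, every component of $M^{T_K}=N_G(T_K)/N_K(T_K)$ is diffeomorphic to $N_G(T_K)_0/(N_G(T_K)_0\cap N_K(T_K)) = T_G/T_K$, a torus of dimension $\rk G-\rk K$. The main obstacle is the argument that $Z_G(T_K)=T_G$; once Lemma~\ref{lem:uniquetorus} is invoked this is essentially immediate, but one must be slightly careful about connectedness of centralizers of tori (which holds in compact connected Lie groups) so that $N_G(T_K)_0$ is correctly identified with $T_G$ and the component count and diffeomorphism type come out right.
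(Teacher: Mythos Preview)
Your proof is correct and follows essentially the same approach as the paper's: both arguments use Lemma~\ref{lem:T-fixedpoints} to write $M^{T_K}=N_G(T_K)/N_K(T_K)$, use Lemma~\ref{lem:uniquetorus} to identify the identity component with $T_G/T_K$, and then invoke homogeneity for the remaining components. The only cosmetic difference is that the paper phrases the key step at the level of Lie algebras (observing that $\mft_\mfp$ is exactly the centralizer of $\mft_\mfk$ in $\mfp$), whereas you argue at the group level that $N_G(T_K)_0=Z_G(T_K)=T_G$; these are equivalent, and the paper also implicitly uses (as you make explicit) that $T_G\cap K=T_K$ because the centralizer of a maximal torus in a compact connected Lie group is the torus itself.
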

\begin{proof}
Because of Lemma \ref{lem:uniquetorus}, the abelian subalgebra $\mft_\mfp \subset \mfp$ is the space of elements in $\mfp$ that commute with $\mft_\mfk$. Thus, Lemma \ref{lem:T-fixedpoints} implies that the component of $M^{T_K}$ containing $eK$ is $T_G/(T_G\cap K)=T_G/T_K$ (note that the centralizer of $T_K$ in $K$ is exactly $T_K$), i.e., a $\rk G-\rk K$-dimensional torus.  Because the fixed set  $M^{T_K}$ is a homogeneous space, all components are diffeomorphic. 
\end{proof}
We therefore understand the structure of the $T_K$-fixed point set $M^{T_K}$ if we know its number of connected components, which we denote by $r$.  In view of condition $(4)$ in Proposition \ref{prop:eqformalequivalent}, we are mostly interested in the dimension of its cohomology.
\begin{prop}\label{prop:cohomoffixedpointset} We have  $\dim H^*(M^{T_K})=2^{\rk G-\rk K}\cdot r$.
\end{prop}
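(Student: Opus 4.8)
The plan is to combine the two previous propositions. By Proposition~\ref{prop:conncomponentsoffixedpoints}, every connected component of $M^{T_K}$ is a torus of dimension $d:=\rk G-\rk K$, and by definition there are exactly $r$ of them. Since cohomology (with real coefficients) of a disjoint union is the direct sum of the cohomologies of the components, and since the cohomology of a $d$-dimensional torus $T^d$ has total dimension $\dim H^*(T^d)=2^d$ (because $H^*(T^d)\cong \Lambda(x_1,\dots,x_d)$ with $\deg x_i=1$, or equivalently by the K\"unneth formula applied to $(S^1)^d$), we obtain
\[
\dim H^*(M^{T_K})=\sum_{i=1}^{r}\dim H^*(T^d)=r\cdot 2^{d}=2^{\rk G-\rk K}\cdot r.
\]

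So the only content is assembling these standard facts; there is essentially no obstacle. The one point that deserves a word is that $M^{T_K}$ is a closed submanifold of the compact manifold $M$ (being the fixed point set of a smooth torus action on a compact manifold), hence it has finitely many components, so the sum above is finite and $r<\infty$; this is already implicit in Proposition~\ref{prop:conncomponentsoffixedpoints} and the discussion preceding it. Beyond this finiteness remark, the proof is immediate from the structure of $M^{T_K}$ established above together with the computation of the Betti numbers of a torus.
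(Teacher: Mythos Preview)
Your argument is correct and is exactly what the paper intends: the proposition is stated there without proof because it follows immediately from Proposition~\ref{prop:conncomponentsoffixedpoints} (each component is a torus of dimension $\rk G-\rk K$) together with the definition of $r$ and the standard fact that $\dim H^*(T^d)=2^d$. There is nothing to add.
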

In order to get a calculable expression for $r$ we will use several results from \cite[Sections 5 and 6]{EMQ} which we now collect. 
Denote by $\Delta_G=\Delta_\mfg$ the root system of $G$ with respect to the maximal torus $T_G$, i.e., the set of nonzero elements $\alpha\in \mft_\mfg^*$ such that the corresponding eigenspace $\mfg_\alpha=\{X\in \mfg^\CC \mid [W,X]=i\alpha(W)X \text{ for all }W\in \mft_\mfg\}$ is nonzero. Then we have the root space decomposition
\begin{equation}\label{eq:rootspacedecomp}
\mfg^\CC=\mft_\mfg^\CC \oplus\bigoplus_{\alpha\in \Delta_\mfg} \mfg_\alpha.
\end{equation}
The $\mfg$-Weyl chambers are the connected components of the set $\mft_\mfg\setminus \bigcup_{\alpha\in \Delta_\mfg} \ker \alpha$. Because of Lemma \ref{lem:uniquetorus}, $\mft_\mfk$ contains $\mfg$-regular elements, hence no root in $\Delta_\mfg$ vanishes on $\mft_\mfk$. Therefore some of the $\mfg$-Weyl chambers intersect $\mft_\mfk$ nontrivially, and following \cite{EMQ} we will refer to these intersections as \emph{compartments}.  Considering as in \cite{EMQ} the decomposition of $\Delta_\mfg$ into complementary subsets $\Delta_\mfg=\Delta'Ê\cup \Delta''$, where
\begin{equation}\label{eq:decomprootsystem}
\Delta'=\{\alpha\in \Delta_\mfg\mid \mfg_\alpha\not\subset \mfp^{\CC}\},\quad \Delta''=\{\alpha\in \Delta_\mfg\mid \mfg_\alpha\subset \mfp^{\CC}\},
\end{equation}
we have by \cite[Lemma 9]{EMQ} that the root system $\Delta_K=\Delta_\mfk$ of $K$ with respect to $T_K$ is given by
\begin{equation}\label{eq:Krootsystem}
\Delta_\mfk=\{\left.\alpha\right|_{\mft_\mfk}\mid \alpha\in \Delta'\}.
\end{equation}
In particular, $\mfg$-regular elements in $\mft_\mfk$ are also $\mfk$-regular, and hence each compartment is contained in a $\mfk$-Weyl chamber.

Because of Lemma \ref{lem:uniquetorus}, the group $N_G(T_K)$ is a subgroup of $N_G(T_G)$. Both groups have the same identity component $T_G$, so we may regard the quotient group $N_G(T_K)/T_G$ as a subgroup of the Weyl group $W(G)$ of $G$. The free action of $W(G)$ on the $\mfg$-Weyl chambers induces an action of $N_G(T_K)/T_G$ on the set of compartments. Because any two compartments are $G$-conjugate \cite[Theorem 10]{EMQ}, this action is simply transitive on the set of compartments, and it follows that the number of connected components of $N_G(T_K)$ equals the total number of compartments in $\mft_\mfk$. On the other hand no connected component of $N_G(T_K)$ contains more than one connected component of $N_K(T_K)$. (An element in $N_K(T_K)\cap T_G$ is an element in $K$ centralizing $T_K$, hence already contained in $T_K$.)  Because the number of connected components of $N_K(T_K)$ equals the number of $\mfk$-Weyl chambers, and each $\mfk$-Weyl chamber contains the same number of compartments \cite[Theorem 10]{EMQ}, we have shown the following lemma.
\begin{lem} \label{lem:rindependentofGK}
The number $r$ of connected components of $M^{T_K}=N_G(T_K)/N_K(T_K)$ is the number of compartments in a fixed $\mfk$-Weyl chamber. In particular it only depends on the Lie algebra pair $(\mfg,\mfk)$.
\end{lem}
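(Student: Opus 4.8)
The plan is to compute $r=\#\pi_0(M^{T_K})$ directly from the identification $M^{T_K}=N_G(T_K)/N_K(T_K)$ of Lemma \ref{lem:T-fixedpoints}, by reading off the component groups of the two normalizers as counts of geometric objects and then dividing.

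First I would use that $N_G(T_K)$ has identity component $T_G$, as noted in the discussion above: a connected group normalizing the torus $T_K$ centralizes it, and by Lemma \ref{lem:uniquetorus} the subalgebra $\mft_\mfk$ contains a $\mfg$-regular element, whose centralizer in $G$ is $T_G$. Hence $\pi_0(N_G(T_K))=N_G(T_K)/T_G$ is a subgroup of $W(G)$. Since $W(G)$ permutes the $\mfg$-Weyl chambers simply transitively, this subgroup acts freely on them; since $N_G(T_K)$ preserves $\mft_\mfk$ it permutes the compartments; and since any two compartments are $G$-conjugate by \cite[Theorem 10]{EMQ}, the induced action on the set of compartments is transitive, hence simply transitive. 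So $\#\pi_0(N_G(T_K))$ equals the total number of compartments in $\mft_\mfk$.

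Next I would record that $\#\pi_0(N_K(T_K))$ equals the number of $\mfk$-Weyl chambers, together with the crucial point that $N_K(T_K)\cap T_G=T_K$: an element of this intersection lies in $K$ and centralizes $T_K$ (being contained in the abelian group $T_G\supseteq T_K$), hence lies in $T_K$. Now $N_G(T_K)/T_G$ acts transitively on $\pi_0(M^{T_K})$, and the stabilizer of the component containing $eN_K(T_K)$ is $T_GN_K(T_K)/T_G\cong N_K(T_K)/(N_K(T_K)\cap T_G)=N_K(T_K)/T_K$, so
\[
r=\frac{\#\pi_0(N_G(T_K))}{\#\pi_0(N_K(T_K))}=\frac{\#\{\text{compartments in }\mft_\mfk\}}{\#\{\mfk\text{-Weyl chambers}\}}.
\]
Since each $\mfk$-Weyl chamber contains the same number of compartments by \cite[Theorem 10]{EMQ}, the right-hand side is precisely the number of compartments in a fixed $\mfk$-Weyl chamber; and this number depends only on the pair $(\mfg,\mfk)$, since compartments and $\mfg$- and $\mfk$-Weyl chambers are defined purely from the root systems $\Delta_\mfg$, $\Delta_\mfk$ and the inclusion $\mft_\mfk\subset\mft_\mfg$.

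The step I expect to demand the most care is the last one: correctly computing the stabilizer of a component of $M^{T_K}$, i.e.\ using $N_K(T_K)\cap T_G=T_K$ to legitimately divide the component counts. Everything else is assembly of facts already in place: Lemma \ref{lem:uniquetorus}, the compartment theory of \cite{EMQ}, and simple-transitivity of Weyl-group actions on Weyl chambers.
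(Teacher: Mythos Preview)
Your proof is correct and follows essentially the same route as the paper's own argument: identify $\#\pi_0(N_G(T_K))$ with the total number of compartments via simple transitivity of $N_G(T_K)/T_G\subset W(G)$ (using \cite[Theorem 10]{EMQ}), use $N_K(T_K)\cap T_G=T_K$ to see that the map $\pi_0(N_K(T_K))\to\pi_0(N_G(T_K))$ is injective, and then divide by $\#\pi_0(N_K(T_K))=|W(K)|$, invoking the equidistribution of compartments among $\mfk$-Weyl chambers. The only cosmetic difference is that you phrase the division as an orbit--stabilizer computation for the $N_G(T_K)/T_G$-action on $\pi_0(M^{T_K})$, whereas the paper simply notes that distinct components of $N_K(T_K)$ lie in distinct components of $N_G(T_K)$; these are equivalent formulations of the same step.
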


 Let $C$ be a $\mfg$-Weyl chamber that intersects $\mft_\mfk$ nontrivially. By \cite[Lemma 8]{EMQ} the compartment $C\cap \mft_\mfk$ can be described explicitly: The involution $\sigma:G\to G$ permutes the $\mfg$-Weyl chambers and fixes $\mft_\mfk$, hence it fixes $C$. Let $B=\{\alpha_1,\ldots,\alpha_{\rk G}\}$ be the corresponding simple roots such that $C$ is exactly the set of points where the elements of $B$ take positive values.  The involution $\sigma$ acts as a permutation group on $B$ because for any $i$ the linear form $\alpha_i\circ \sigma$ is again positive on $C$. Note that for every root $\alpha\in \Delta_\mfg$ the linear form $\frac{1}{2}(\alpha + \alpha \circ \sigma)$ vanishes on $\mft_\mfp$ and coincides with $\left.{\alpha}\right|_{\mft_\mfk}$ on $\mft_\mfk$. The set $\left.B\right|_{\mft_\mfk}=\{\left.{\alpha_i}\right|_{\mft_\mfk}\mid i=1,\ldots,\rk G\}$ is a basis of $\mft_\mfk^*$ (in particular it consists of $\dim \mft_\mfk$ elements)  and the compartment $C\cap \mft_\mfk$ is exactly the set of points in $\mft_\mfk$ where all $\left.{\alpha_i}\right|_{\mft_\mfk}$ take positive values. It is a simplicial cone bounded by the hyperplanes $\ker \left.{\alpha_i}\right|_{\mft_\mfk}$. Any such hyperplane is either a wall of a $\mfk$-Weyl chamber or the kernel of a $\mfg$-root $\alpha_i$ with $\alpha_i\circ \sigma=\alpha_i$, see \eqref{eq:Krootsystem}. In any case, reflection along the hyperplane defines an element of $N_G(T_K)/T_G$ and takes  $C\cap \mft_\mfk$ to an adjacent compartment. (This argument is taken from the proof of \cite[Theorem 10]{EMQ}.) 
 
It follows that the action of $N_G(T_K)/T_G$ on the set of compartments described above is generated by the reflections along all hyperplanes $\ker \left.\alpha\right|_{\mft_\mfk}$, where $\alpha\in \Delta_\mfg$. Let $\langle\cdot,\cdot\rangle$ be the Killing form on $\mfg$. The decomposition $\mfg=\mfk\oplus \mfp$ is orthogonal with respect to $\langle\cdot,\cdot\rangle$. We identify $\mft_\mfg^*$ with $\mft_\mfg$ and $\mft_\mfk^*$ with $\mft_\mfk$ via $\langle\cdot,\cdot\rangle$. For $\alpha\in \Delta_\mfg$, let $H_\alpha\in \mft_\mfg$ be the element such that $\alpha(H)=\langle H,H_\alpha\rangle$ for all $H\in \mft_\mfg$. Given $X\in \mft_\mfg$, we write $X^\mfk$ and $X^\mfp$ for the $\mfk$- and $\mfp$-parts of $X$ respectively. Then $H_\alpha^\mfk$ corresponds to $\left.\alpha\right|_{\mft_\mfk}$ under the isomorphism $\mft_\mfk\cong \mft_\mfk^*$.
\begin{lem}\label{lem:rootlemma} Let $\alpha\in \Delta_\mfg$ be a root with $\alpha\circ \sigma\neq \alpha$. Then either
\begin{enumerate}
\item $\langle H_\alpha,H_{\alpha\circ \sigma}\rangle=0$ and $|H^\mfp_\alpha|^2=|H^\mfk_\alpha|^2$ or
\item $2\cdot\frac{\langle H_\alpha,H_{\alpha\circ \sigma}\rangle}{|H_\alpha|^2}=-1$, $|H^\mfp_\alpha|^2=3|H^\mfk_\alpha|^2$ and $\alpha+\alpha\circ \sigma\in \Delta_\mfg$.
\end{enumerate}
\end{lem}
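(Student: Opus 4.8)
The plan is to reduce the statement to elementary combinatorics of the two roots $\alpha$ and $\alpha\circ\sigma$, exploiting that $\sigma$ is an isometry of the Killing form. First I would record the following. Since $\sigma$ is an automorphism of $\mfg$ it preserves $\langle\cdot,\cdot\rangle$, and since $\sigma$ preserves $\mft_\mfg$ (being $+1$ on $\mft_\mfk$ and $-1$ on $\mft_\mfp$) it permutes $\Delta_\mfg$; from $\langle\sigma H_\alpha,H\rangle=\langle H_\alpha,\sigma H\rangle=\alpha(\sigma H)=(\alpha\circ\sigma)(H)$ for all $H\in\mft_\mfg$ one gets $\sigma H_\alpha=H_{\alpha\circ\sigma}$, i.e.\ $H_{\alpha\circ\sigma}=H^\mfk_\alpha-H^\mfp_\alpha$. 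Using $\mfk\perp\mfp$ this yields
\[
\langle H_\alpha,H_{\alpha\circ\sigma}\rangle=|H^\mfk_\alpha|^2-|H^\mfp_\alpha|^2,\qquad |H_\alpha|^2=|H_{\alpha\circ\sigma}|^2=|H^\mfk_\alpha|^2+|H^\mfp_\alpha|^2 .
\]
I would also note $H^\mfk_\alpha\neq 0$ (otherwise $\alpha$ vanishes on $\mft_\mfk$, impossible since $\mft_\mfk$ contains $\mfg$-regular elements by Lemma \ref{lem:uniquetorus}) and $H^\mfp_\alpha\neq 0$ (otherwise $H_\alpha\in\mft_\mfk$, so $\sigma H_\alpha=H_\alpha$ and $\alpha\circ\sigma=\alpha$, against the hypothesis).

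Next I would look at $c:=2\langle H_\alpha,H_{\alpha\circ\sigma}\rangle/|H_\alpha|^2$. Because $\sigma$ is an isometry, $\alpha$ and $\alpha\circ\sigma$ have equal length, so $c$ is the Cartan integer of the pair $(\alpha,\alpha\circ\sigma)$ and hence lies in $\ZZ$. On the other hand, by the identities above $c=2(|H^\mfk_\alpha|^2-|H^\mfp_\alpha|^2)/(|H^\mfk_\alpha|^2+|H^\mfp_\alpha|^2)$, and since both the $\mfk$- and the $\mfp$-part of $H_\alpha$ are nonzero, $|c|<2$; therefore $c\in\{-1,0,1\}$.

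The crux of the lemma is that $c=1$ is excluded, and this is the step I expect to carry the weight. If $c=1$, then $\langle H_\alpha,H_{\alpha\circ\sigma}\rangle>0$ and $\alpha\neq\alpha\circ\sigma$, so $\gamma:=\alpha-\alpha\circ\sigma$ is a nonzero root of $\Delta_\mfg$ (the standard root-string fact that a positive inner product between distinct roots forces their difference to be a root); but $\gamma\circ\sigma=-\gamma$ forces $\gamma$ to vanish on $\mft_\mfk$, contradicting that no $\mfg$-root does. Hence $c\in\{-1,0\}$. If $c=0$ the identities above give $|H^\mfp_\alpha|^2=|H^\mfk_\alpha|^2$, which is alternative (1). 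If $c=-1$ they give $2(|H^\mfk_\alpha|^2-|H^\mfp_\alpha|^2)=-(|H^\mfk_\alpha|^2+|H^\mfp_\alpha|^2)$, i.e.\ $|H^\mfp_\alpha|^2=3|H^\mfk_\alpha|^2$; moreover $\langle H_\alpha,H_{\alpha\circ\sigma}\rangle<0$ and $\alpha\neq-(\alpha\circ\sigma)$ (again since $\alpha$ does not vanish on $\mft_\mfk$), so $\alpha+\alpha\circ\sigma\in\Delta_\mfg$ by the same kind of root-string argument. This is alternative (2), which finishes the proof.

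Apart from excluding $c=1$ --- geometrically, the assertion that $\alpha$ and $\alpha\circ\sigma$ cannot enclose a $60^\circ$ angle --- everything here is routine. The only points to watch are the sign conventions for the (negative definite) Killing form, which are harmless because every relation involved is homogeneous of degree zero in $\langle\cdot,\cdot\rangle$, and the orthogonal decomposition $\mft_\mfg=\mft_\mfk\oplus\mft_\mfp$ together with $\mfk\perp\mfp$, which is exactly what makes $\sigma$ act so transparently on $H_\alpha$.
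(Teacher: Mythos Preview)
Your argument is correct and follows essentially the same route as the paper: both compute $H_{\alpha\circ\sigma}=H^\mfk_\alpha-H^\mfp_\alpha$, observe that the Cartan integer $c$ lies in $\{-1,0,1\}$, and rule out $c=1$ by noting that $\alpha-\alpha\circ\sigma$ would then be a root vanishing on $\mft_\mfk$, which contradicts Lemma~\ref{lem:uniquetorus}. The only cosmetic difference is that you deduce $|c|<2$ directly from the formula and the nonvanishing of $H^\mfk_\alpha,H^\mfp_\alpha$, whereas the paper cites the equal-length root pair classification in \cite{Knapp}.
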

\begin{proof} We have $H_{\alpha\circ \sigma}=H^\mfk_\alpha-H^\mfp_\alpha$, and because $\Delta_\mfg$ is a root system it follows that
\[
2\cdot\frac{\langle H_\alpha,H_{\alpha\circ \sigma}\rangle}{|H_\alpha|^2}=2\cdot \frac{|H_\alpha^\mfk|^2 - |H_\alpha^\mfp|^2}{|H_\alpha^\mfk|^2+ |H_\alpha^\mfp|^2}\in \ZZ.
\]
Because $\alpha$ and $\alpha\circ \sigma$ are roots of equal length, this integer can only equal $0$ or $\pm 1$ \cite[Proposition 2.48.(d)]{Knapp}. Further, because $\alpha-\alpha\circ \sigma$ is not a root (by Lemma \ref{lem:uniquetorus} no root vanishes on $\mft_\mfk$) and not $0$, only the possibilities $0$ and $-1$ remain, and in the latter case we also have that $\alpha+\alpha\circ\sigma\in \Delta_\mfg$ \cite[Proposition 2.48.(e)]{Knapp}. 
\end{proof}
\begin{prop}  The set $\left.\Delta_\mfg\right|_{\mft_\mfk}=\{\left.\alpha\right|_{\mft_\mfk}\mid \alpha\in \Delta_\mfg\}$ is a root system in $\mft_\mfk^*$.
\end{prop}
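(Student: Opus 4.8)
The plan is to check directly that $\Phi:=\left.\Delta_\mfg\right|_{\mft_\mfk}$ satisfies the axioms of a (not necessarily reduced) root system, working throughout in $\mft_\mfk$ rather than $\mft_\mfk^*$ via the identification $\mft_\mfk^*\cong\mft_\mfk$ already fixed above, under which $\bar\alpha:=\left.\alpha\right|_{\mft_\mfk}$ corresponds to $H_\alpha^\mfk$. The set $\Phi$ is finite, it spans $\mft_\mfk^*$ since $\left.B\right|_{\mft_\mfk}\subset\Phi$ is a basis, and $0\notin\Phi$ because no $\mfg$-root vanishes on $\mft_\mfk$ by Lemma \ref{lem:uniquetorus}. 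It remains to establish: (i) stability of $\Phi$ under the reflections $s_{\bar\alpha}$, $\alpha\in\Delta_\mfg$; and (ii) integrality of the Cartan numbers $2\langle\bar\alpha,\bar\beta\rangle/|\bar\beta|^2$ for $\alpha,\beta\in\Delta_\mfg$.

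For (i) I would simply reorganize facts already assembled above. For each $\alpha\in\Delta_\mfg$ the orthogonal reflection $s_{\bar\alpha}$ of $\mft_\mfk$ in $\ker\bar\alpha$ is the restriction to $\mft_\mfk$ of an element $w_\alpha$ of $N_G(T_K)/T_G\subset W(G)$. Since $w_\alpha$ permutes $\Delta_\mfg$ and preserves $\mft_\mfk$, for any $\beta\in\Delta_\mfg$ one has $\left.(w_\alpha\beta)\right|_{\mft_\mfk}=\bar\beta\circ\left.w_\alpha^{-1}\right|_{\mft_\mfk}=\bar\beta\circ s_{\bar\alpha}$, which under our identification is exactly $s_{\bar\alpha}(\bar\beta)$ (the reflection being self-adjoint); hence $s_{\bar\alpha}(\bar\beta)\in\Phi$.

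The substance is (ii), which is precisely what Lemma \ref{lem:rootlemma} is for. From $H_{\beta\circ\sigma}=H_\beta^\mfk-H_\beta^\mfp$ and the orthogonality of $\mfk$ and $\mfp$ one gets $\langle H_\alpha,H_\beta\rangle+\langle H_\alpha,H_{\beta\circ\sigma}\rangle=2\langle H_\alpha^\mfk,H_\beta^\mfk\rangle$. On the other hand Lemma \ref{lem:rootlemma}, together with the trivial case $\beta\circ\sigma=\beta$ (in which $H_\beta\in\mft_\mfk$), gives $|H_\beta|^2=c_\beta\,|H_\beta^\mfk|^2$ with $c_\beta\in\{1,2,4\}$. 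Combining these, and using that $\beta$ and $\beta\circ\sigma$ have equal length and that the $\mfg$-Cartan numbers $m:=2\langle H_\alpha,H_\beta\rangle/|H_\beta|^2$ and $m':=2\langle H_\alpha,H_{\beta\circ\sigma}\rangle/|H_{\beta\circ\sigma}|^2$ are integers, I obtain
\[
2\frac{\langle\bar\alpha,\bar\beta\rangle}{|\bar\beta|^2}=c_\beta\,\frac{\langle H_\alpha,H_\beta\rangle+\langle H_\alpha,H_{\beta\circ\sigma}\rangle}{|H_\beta|^2}=\frac{c_\beta}{2}\,(m+m').
\]
A short case check on $c_\beta$ concludes: for $c_\beta=2$ or $4$ the right-hand side is visibly an integer, and for $c_\beta=1$ we have $\beta\circ\sigma=\beta$, hence $m=m'$ and the value is $m\in\ZZ$.

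Extracting the exact multiple $|H_\beta|^2/|H_\beta^\mfk|^2\in\{1,2,4\}$ from the crystallographic constraints on $\Delta_\mfg$ is the only genuinely delicate point, and Lemma \ref{lem:rootlemma} handles it; everything else is formal. I would close by noting that $\Phi$ need not be reduced: in case (2) of Lemma \ref{lem:rootlemma} the root $\alpha+\alpha\circ\sigma$ restricts to $2\bar\alpha$, so $\bar\alpha$ and $2\bar\alpha$ may both belong to $\Phi$.
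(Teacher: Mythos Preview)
Your proof is correct and amounts to a streamlined reorganization of the paper's argument; both hinge on Lemma~\ref{lem:rootlemma}, but the two axioms are handled differently.

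For integrality, the paper splits into four cases according to whether $\alpha$ and $\beta$ lie in $\Delta'$ or $\Delta''$ (see \eqref{eq:decomprootsystem}) and treats each separately. Your single identity $2\langle\bar\alpha,\bar\beta\rangle/|\bar\beta|^2=\tfrac{c_\beta}{2}(m+m')$ with $c_\beta\in\{1,2,4\}$ replaces this by a uniform computation followed by a three-line case check on $c_\beta$; this is cleaner and uses Lemma~\ref{lem:rootlemma} in exactly the same way.

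For reflection stability, the paper works harder: it exhibits in each case an explicit element of $W(G)$ preserving $\mft_\mfk$ whose restriction is $s_{\bar\alpha}$ (namely $s_\alpha$ when $H_\alpha\in\mft_\mfk$, the product $s_{\alpha\circ\sigma}s_\alpha$ in case~(1) of Lemma~\ref{lem:rootlemma}, and $s_{\alpha+\alpha\circ\sigma}$ in case~(2)). You instead appeal to the discussion preceding the proposition, which already places every $s_{\bar\alpha}$ in $N_G(T_K)/T_G\subset W(G)$. That appeal is legitimate---strictly speaking the text there treats only the walls of a fixed compartment, but since every hyperplane $\ker\bar\alpha$ is a wall of \emph{some} compartment, the same argument applies verbatim. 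Your route is shorter; the paper's explicit identification has the advantage of making the subsequent Corollary (that $N_G(T_K)/T_G$ acts as $W(\left.\Delta_\mfg\right|_{\mft_\mfk})$) completely transparent, since one sees directly that each reflection in the latter group is realized by a specific element of the former.
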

\begin{proof} 
It is clear that $\left.\Delta_\mfg\right|_{\mft_\mfk}$ spans $\mft_\mfk^*$. We have to check that for all $\alpha,\beta\in \Delta_\mfg$, the quantity
\begin{equation}\label{eq:checkinteger}
2\cdot \frac{\langle H_\alpha^\mfk,H_\beta^\mfk\rangle}{|H_\alpha^\mfk|^2}
\end{equation}
is an integer. With respect to the decomposition $\Delta_\mfg=\Delta'\cup \Delta''$ (see \eqref{eq:decomprootsystem}) there are four cases:

If both $\alpha$ and $\beta$ are elements of $\Delta'$, then \eqref{eq:checkinteger} is an integer because $\left.\alpha\right|_{\mft_\mfk}$ and $\left.\beta\right|_{\mft_\mfk}$ are $\mfk$-roots, see \eqref{eq:Krootsystem}. In case $\alpha$  and $\beta$ are elements of $\Delta''$, then the corresponding vectors $H_\alpha$ and $H_\beta$ are already elements of $\mft_\mfk$, so $H_\alpha^\mfk=H_\alpha$ and $H_\beta^\mfk=H_\beta$, hence \eqref{eq:checkinteger} is an integer.

Consider the case that $\alpha\in \Delta''$ and $\beta\in \Delta'$. Then $H_\alpha=H_\alpha^\mfk\in \mft_\mfk$, hence
\[
2\cdot \frac{\langle H_\alpha^\mfk,H_\beta^\mfk\rangle}{|H_\alpha^\mfk|^2}= 2\cdot \frac{\langle H_\alpha,H_\beta\rangle}{|H_\alpha|^2}\in \ZZ.
\]
The last case to be considered is that $\alpha\in \Delta'$ and $\beta\in \Delta''$. In this case $H_\beta=H_\beta^\mfk\in \mft_\mfk$. It may happen that $H_\alpha\in \mft_\mfk$, but then the claim would follow as before, so we may assume that $H_\alpha\notin \mft_\mfk$. It follows that $\alpha\circ \sigma$ is a root different from $\alpha$. By Lemma \ref{lem:rootlemma} we have $|H^\mfp_\alpha|^2=c|H^\mfk_\alpha|^2$ with $c=1$ or $c=3$.  We know that
 \[
 2\cdot \frac{\langle H_\alpha,H_\beta \rangle}{|H_\alpha|^2} =  2\cdot \frac{\langle H_\alpha^\mfk,H_\beta^\mfk \rangle}{|H_\alpha^\mfk|^2+|H_\alpha^\mfp|^2}=\frac{2}{1+c}\cdot \frac{\langle H_\alpha^\mfk,H_\beta^\mfk \rangle}{|H_\alpha^\mfk|^2}
 \]
 is an integer, hence multiplying with the integer $1+c$ shows that  \eqref{eq:checkinteger} is an integer in this case as well.

Next we have to check that for each $\alpha\in \Delta_\mfg$ the reflection $s_{\left.\alpha\right|_{\mft_\mfk}}:\mft_\mfk\to \mft_\mfk$ along $\ker \left.\alpha\right|_{\mft_\mfk}$ defined by
\begin{equation}\label{eq:reflection}
X\mapsto X-2\cdot \frac{\langle H_\alpha^\mfk,X\rangle}{|H_\alpha^\mfk|^2} H_\alpha^\mfk
\end{equation}
sends $\{H_\beta^\mfk\mid \beta\in \Delta_\mfg\}$ to itself. If $H_\alpha\in \mft_\mfk$ (this includes the case $\alpha\in \Delta''$), then the reflection $s_\alpha:\mft_\mfg\to \mft_\mfg$ along $\ker \alpha$ leaves invariant $\mft_\mfk$, and \eqref{eq:reflection} is nothing but the restriction of this reflection to $\mft_\mfk$. Thus, $\{H_\beta^\mfk\mid \beta\in \Delta_\mfg\}$ is sent to itself.

Let $\alpha\in \Delta'$ with $H_\alpha\notin \mft_\mfk$. 
We treat the two cases that can arise by Lemma \ref{lem:rootlemma} separately: assume first that $\langle H_\alpha,H_{\alpha\circ \sigma}\rangle=0$. In this case the two reflections $s_\alpha$ and $s_{\alpha\circ \sigma}$ commute and we have, recalling that $H_{\alpha\circ \sigma}=H_\alpha^\mfk-H_\alpha^\mfp$,
\begin{align*}
s_{\alpha\circ \sigma}\circ s_\alpha(X)&=X-2\cdot \frac{\langle H_\alpha,X\rangle}{|H_\alpha|^2} H_\alpha-2\cdot \frac{\langle H_{\alpha\circ \sigma},X\rangle}{|H_{\alpha\circ \sigma}|^2} H_{\alpha\circ \sigma}\\
&=X-2\cdot \frac{\langle H_{\alpha},X\rangle+\langle H_{\alpha\circ \sigma},X\rangle}{2|H_\alpha^\mfk|^2} H_\alpha^\mfk
- 2\cdot \frac{\langle H_{\alpha},X\rangle-\langle H_{\alpha\circ \sigma},X\rangle}{2|H_\alpha^\mfp|^2} H_\alpha^\mfp\\
&=X-2\cdot \frac{\langle H_\alpha^\mfk,X\rangle}{|H_\alpha^\mfk|^2}H_\alpha^\mfk + 2 \cdot \frac{\langle H_\alpha^\mfp,X\rangle}{|H_\alpha^\mfp|^2}H_\alpha^\mfp.
\end{align*} 
In particular for each $\beta\in \Delta_\mfg$ the vector $H_\beta^\mfk-2\cdot \frac{\langle H_\alpha^\mfk,H_\beta^\mfk\rangle}{|H_\alpha^\mfk|^2}H_\alpha^\mfk$ is the $\mfk$-part of some vector $H_\gamma$, which shows that \eqref{eq:reflection} sends $\{H_\beta^\mfk\mid \beta\in \Delta_\mfg\}$ to itself.

In the second case of Lemma \ref{lem:rootlemma} we have that $\alpha+\alpha\circ \sigma\in \Delta_\mfg$, with $\ker (\alpha+\alpha\circ \sigma)=\ker \left.\alpha\right|_{\mft_\mfk} \oplus \mft_\mfp$. Thus, the reflection $s_{\left.\alpha\right|_{\mft_\mfk}}$ is nothing but the restriction of $s_{\alpha+\alpha\circ\sigma}$ to $\mft_\mfk$; in particular it sends $\{H_\beta^\mfk\mid \beta\in \Delta_\mfg\}$ to itself.
\end{proof}
\begin{rem} \label{rem:reduced}
The root system $\left.\Delta_\mfg\right|_{\mft_\mfk}$ is not necessarily reduced: if there exists a root $\alpha\in \Delta_\mfg$ with $\alpha\circ \sigma\neq \alpha$ for which the second case of Lemma \ref{lem:rootlemma} holds, then it contains $\left.\alpha\right|_{\mft_\mfk}$ as well as $2 \cdot\left.\alpha\right|_{\mft_\mfk}$. This happens for instance for $\SU(2m+1)/\SO(2m+1)$.
\end{rem}

Because $B$ is the set of simple roots of $\Delta_\mfg$ every root $\alpha\in \Delta_\mfg$ can be written as a linear combination of elements in $B$ with integer coefficients of the same sign. It follows that every restriction $\left.\alpha\right|_{\mft_\mfk}\in \left.\Delta\right|_{\mft_\mfk}$ is a linear combination of elements in $\left. B\right|_{\mft_\mfk}$ of the same kind. We thus have proven the following lemma.
\begin{lem}
The $\left.\Delta_\mfg\right|_{\mft_\mfk}$-Weyl chambers are exactly the compartments. If $C$ is a $\mfg$-Weyl chamber that intersects $\mft_\mfk$ nontrivially, with corresponding set of simple roots $B\subset \Delta_\mfg$, then $\left.B\right|_{\mft_\mfk}$ is the set of simple roots of the root system $\left.\Delta_\mfg\right|_{\mft_\mfk}$ corresponding to $C\cap \mft_\mfk$.
\end{lem}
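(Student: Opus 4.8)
The plan is to compare two hyperplane arrangements in $\mft_\mfk$ and then read off the base. First I would note that, by definition, the $\left.\Delta_\mfg\right|_{\mft_\mfk}$-Weyl chambers are the connected components of $\mft_\mfk\setminus\bigcup_{\alpha\in\Delta_\mfg}\ker\left.\alpha\right|_{\mft_\mfk}$. By Lemma \ref{lem:uniquetorus} no $\alpha\in\Delta_\mfg$ vanishes on $\mft_\mfk$, so $\ker\left.\alpha\right|_{\mft_\mfk}=\ker\alpha\cap\mft_\mfk$, and hence this set equals the intersection of the subspace $\mft_\mfk$ with $\mft_\mfg\setminus\bigcup_{\alpha\in\Delta_\mfg}\ker\alpha$, i.e. with the union of all $\mfg$-Weyl chambers. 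It is therefore the disjoint union of the sets $C\cap\mft_\mfk$ taken over the $\mfg$-Weyl chambers $C$ meeting $\mft_\mfk$ — that is, over the compartments. Each compartment is open in $\mft_\mfk$ (since $C$ is open in $\mft_\mfg$) and convex, being the intersection of convex cones — in fact it is the simplicial cone cut out by the $\ker\left.\alpha_i\right|_{\mft_\mfk}$ described above — hence connected. So the compartments are precisely the connected components of $\mft_\mfk\setminus\bigcup_{\alpha\in\Delta_\mfg}\ker\left.\alpha\right|_{\mft_\mfk}$, which proves that they are exactly the $\left.\Delta_\mfg\right|_{\mft_\mfk}$-Weyl chambers.

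Next I would identify the base. Fix a $\mfg$-Weyl chamber $C$ meeting $\mft_\mfk$ with simple roots $B=\{\alpha_1,\dots,\alpha_{\rk G}\}$. Since $B$ is a base of $\Delta_\mfg$, every $\alpha\in\Delta_\mfg$ is an integer combination of $B$ whose coefficients are all $\geq0$ or all $\leq0$; restricting to $\mft_\mfk$ and collecting the terms $\left.\alpha_i\right|_{\mft_\mfk}$ that coincide, we see that every $\left.\alpha\right|_{\mft_\mfk}\in\left.\Delta_\mfg\right|_{\mft_\mfk}$ is an integer combination of the elements of the set $\left.B\right|_{\mft_\mfk}$ with coefficients of a single sign. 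As recalled above, $\left.B\right|_{\mft_\mfk}$ consists of exactly $\dim\mft_\mfk$ elements and is a basis of $\mft_\mfk^*=\mathrm{span}\,\left.\Delta_\mfg\right|_{\mft_\mfk}$; a linearly independent, spanning subset of a root system through which every root factors with uniform sign is, by the standard characterization of a base, a simple system. Thus $\left.B\right|_{\mft_\mfk}$ is a base of $\left.\Delta_\mfg\right|_{\mft_\mfk}$, and its associated Weyl chamber $\{x\in\mft_\mfk\mid\left.\alpha_i\right|_{\mft_\mfk}(x)>0\text{ for all }i\}$ is exactly $C\cap\mft_\mfk$ by the explicit description of the compartment recalled above. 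Hence $\left.B\right|_{\mft_\mfk}$ is the set of simple roots of $\left.\Delta_\mfg\right|_{\mft_\mfk}$ corresponding to the chamber $C\cap\mft_\mfk$.

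I do not expect a genuine obstacle: the geometric content — that $C\cap\mft_\mfk$ is the simplicial cone bounded by the $\ker\left.\alpha_i\right|_{\mft_\mfk}$, and that $\left.B\right|_{\mft_\mfk}$ is a basis of $\mft_\mfk^*$ — has already been established in the discussion preceding the statement, and $\left.\Delta_\mfg\right|_{\mft_\mfk}$ has just been shown to be a root system. The only point that needs a little care is bookkeeping: the restriction map $B\to\left.B\right|_{\mft_\mfk}$ is not injective (it identifies simple roots interchanged by $\sigma$), so one must pass to $\left.B\right|_{\mft_\mfk}$ as a set of $\dim\mft_\mfk$ elements before invoking the characterization of a base of a root system.
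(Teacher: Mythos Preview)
Your proposal is correct and follows essentially the same approach as the paper. The paper's proof is the short paragraph immediately preceding the lemma: it observes that every $\alpha\in\Delta_\mfg$ is an integer combination of $B$ with coefficients of a single sign, hence the same holds for the restrictions, and combined with the earlier fact that $\left.B\right|_{\mft_\mfk}$ is a basis of $\mft_\mfk^*$ this gives both claims at once. You add a direct hyperplane-arrangement argument for the first assertion (compartments $=$ chambers of $\left.\Delta_\mfg\right|_{\mft_\mfk}$), which the paper leaves implicit in the base characterization; this is a harmless and arguably clearer reorganization, not a different method.
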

Recall that  the $N_G(T_K)/T_G$-action on the set of compartments was shown to be generated by the reflections along all hyperplanes $\ker \left.\alpha\right|_{\mft_\mfk}$, where $\alpha\in \Delta_\mfg$. Thus, we obtain
\begin{cor} The $N_G(T_K)/T_G$-action on the set of compartments is the same as the action of the Weyl group $W(\left.\Delta_\mfg\right|_{\mft_\mfk})$. In particular, it is generated by the reflections along the hyperplanes $\ker \left.\alpha_i\right|_{\mft_\mfk}$. Furthermore, $r=\frac{|W(\left.\Delta_\mfg\right|_{\mft_\mfk})|}{|W(\mfk)|}$.
\end{cor}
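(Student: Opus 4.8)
The plan is to establish the two assertions of the corollary separately: first the identification of the $N_G(T_K)/T_G$-action on compartments with the Weyl group action of $\left.\Delta_\mfg\right|_{\mft_\mfk}$, and then the index formula for $r$.

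\textbf{Identifying the two actions.} By the preceding lemma the compartments are precisely the Weyl chambers of the root system $\left.\Delta_\mfg\right|_{\mft_\mfk}$ in $\mft_\mfk^*\cong\mft_\mfk$, so $W(\left.\Delta_\mfg\right|_{\mft_\mfk})$ is by definition the group generated by the reflections $s_{\left.\alpha\right|_{\mft_\mfk}}$ for $\alpha\in\Delta_\mfg$, acting on the set of these chambers. On the other hand, recall from the discussion following Lemma \ref{lem:rootlemma} that the $N_G(T_K)/T_G$-action on compartments is \emph{also} generated by the reflections along all hyperplanes $\ker\left.\alpha\right|_{\mft_\mfk}$, $\alpha\in\Delta_\mfg$, and that this action is simply transitive on compartments. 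It therefore suffices to check that each such reflection, viewed as an element of $N_G(T_K)/T_G\subset W(G)$, acts on $\mft_\mfk$ in exactly the way $s_{\left.\alpha\right|_{\mft_\mfk}}$ does. This was essentially verified in the proof of the proposition preceding Remark \ref{rem:reduced}: if $H_\alpha\in\mft_\mfk$ then $s_{\left.\alpha\right|_{\mft_\mfk}}$ is the restriction of $s_\alpha\in W(G)$; if $H_\alpha\notin\mft_\mfk$ and case (1) of Lemma \ref{lem:rootlemma} holds, $s_{\left.\alpha\right|_{\mft_\mfk}}$ is the restriction of the commuting product $s_\alpha s_{\alpha\circ\sigma}\in W(G)$; and in case (2) it is the restriction of $s_{\alpha+\alpha\circ\sigma}\in W(G)$. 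In all cases the relevant element of $W(G)$ lies in $N_G(T_K)/T_G$ because it preserves $\mft_\mfk$ (indeed $s_\alpha s_{\alpha\circ\sigma}=s_{\alpha\circ\sigma}s_\alpha$ is $\sigma$-invariant, and $s_{\alpha+\alpha\circ\sigma}$ fixes $\ker(\alpha+\alpha\circ\sigma)\supset\mft_\mfp$), so the two groups of transformations of $\mft_\mfk$ are generated by the same reflections and hence coincide. Since both act simply transitively on compartments, the two permutation actions are identified; in particular the action is generated by the reflections along $\ker\left.\alpha_i\right|_{\mft_\mfk}$ for the simple roots $\alpha_i$, these being the walls of a fixed compartment by the preceding lemma.

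\textbf{The index formula.} Write $W:=W(\left.\Delta_\mfg\right|_{\mft_\mfk})=N_G(T_K)/T_G$ acting simply transitively on the set of compartments, whose cardinality is thus $|W|$. By Lemma \ref{lem:rindependentofGK} this set is partitioned into $\mfk$-Weyl chambers, each containing the same number $r$ of compartments, and the number of $\mfk$-Weyl chambers is $|W(\mfk)|$; hence $|W|=r\cdot|W(\mfk)|$, i.e. $r=|W(\left.\Delta_\mfg\right|_{\mft_\mfk})|/|W(\mfk)|$. Alternatively one can argue group-theoretically: the subgroup of $W$ generated by reflections along those walls $\ker\left.\alpha_i\right|_{\mft_\mfk}$ that are walls of a $\mfk$-Weyl chamber is a copy of $W(\mfk)$, and the compartments inside a fixed $\mfk$-Weyl chamber are a fundamental domain for it, giving the same count.

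\textbf{Main obstacle.} The only genuinely delicate point is the bookkeeping in the first step: one must be sure that every reflection $s_{\left.\alpha\right|_{\mft_\mfk}}$ generating $W(\left.\Delta_\mfg\right|_{\mft_\mfk})$ really is realized by an element of $N_G(T_K)/T_G$ acting the correct way on $\mft_\mfk$, and conversely that the reflections generating the $N_G(T_K)/T_G$-action are exactly these. Both directions were prepared in the proof of the proposition above (the case analysis via Lemma \ref{lem:rootlemma}) and in the remark that reflections along the walls of a compartment lie in $N_G(T_K)/T_G$, so here it is a matter of assembling those observations rather than new work; the formula for $r$ is then immediate from simple transitivity.
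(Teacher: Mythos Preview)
Your proposal is correct and follows essentially the same approach as the paper: both identify the two actions by observing that each is generated by the reflections along the hyperplanes $\ker\left.\alpha\right|_{\mft_\mfk}$ (the paper states this in the sentence immediately preceding the corollary), and both obtain the formula for $r$ from simple transitivity together with Lemma~\ref{lem:rindependentofGK}. Your write-up simply spells out in more detail the case analysis (already contained in the proof of the proposition that $\left.\Delta_\mfg\right|_{\mft_\mfk}$ is a root system) showing how each reflection is realized inside $N_G(T_K)/T_G$; this is helpful exposition but not a different argument.
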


Recall  that whereas a reduced root system is determined by its simple roots \cite[Proposition 2.66]{Knapp}, this is no longer true for nonreduced root systems such as $\left.\Delta_\mfg\right|_{\mft_\mfk}$, see \cite[II.8]{Knapp}. However, the reduced elements in a nonreduced root system always form a reduced root system \cite[Lemma 2.91]{Knapp} with the same simple roots and the same Weyl group. Using the following proposition taken from \cite{Loos} we will identify this reduced root system contained in $\left.\Delta_\mfg\right|_{\mft_\mfk}$ with the root system of a second symmetric subalgebra $\mfk'\subset \mfg$. 

\begin{prop}[{\cite[Proposition VII.3.4]{Loos}}] \label{prop:loos} There is an extension of $\sigma:\mft_\mfg\to \mft_\mfg$ to an involutive automorphism $\sigma':\mfg\to \mfg$ such that its $\CC$-linear extension $\sigma':\mfg^\CC\to \mfg^\CC$ satisfies $\left.\sigma'\right|_{\mfg_{\alpha}}=\id$ for every root $\alpha\in B$ with $\alpha=\alpha\circ \sigma$.  The root system of the fixed point algebra $\mfk'=\mfg^{\sigma'}$ relative to the maximal abelian subalgebra $\mft_\mfk$ has $\left.B\right|_{\mft_\mfk}$ as simple roots.
\end{prop}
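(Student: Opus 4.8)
The plan is to realise $\sigma'$ explicitly as a diagram automorphism. Since $\sigma$ fixes $\mft_\mfk$ pointwise and the chamber $C$ meets $\mft_\mfk$, the chamber $C$ is $\sigma$-stable, so $\sigma$ permutes the simple system $B$; write $\pi$ for this permutation, so that $\pi(\alpha_i)=\alpha_i\circ\sigma$. As an automorphism of $\mfg$, $\sigma$ preserves the Killing form, hence $\pi$ preserves the Cartan integers and is an automorphism of the Dynkin diagram of $\Delta_\mfg$, and $\pi^2=\id$ because $\sigma^2=\id$. First I would fix root vectors $e_{\alpha_i}$ for the simple roots $\alpha_i\in B$, chosen --- as one may --- so that the resulting automorphism preserves the compact real form, and take $\sigma'$ to be the automorphism of $\mfg$ that preserves the pinning $(\mft_\mfg,B,\{e_{\alpha_i}\})$ and induces $\pi$, i.e.\ $\sigma'(\mft_\mfg)=\mft_\mfg$, the map induced by $\sigma'$ on $\mft_\mfg^*$ is $\pi$, and $\sigma'(e_{\alpha_i})=e_{\pi(\alpha_i)}$ for all $\alpha_i\in B$. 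Three things then remain, all essentially formal: (i) $(\sigma')^2$ again preserves the pinning and induces $\pi^2=\id$, hence is the identity, so $\sigma'$ is an involution; (ii) $\sigma'$ and $\sigma$ induce the same map $\alpha_i\mapsto\pi(\alpha_i)$ on the basis $B$ of $\mft_\mfg^*$, hence $\sigma'|_{\mft_\mfg}=\sigma|_{\mft_\mfg}$, and in particular $\mft_\mfg^{\sigma'}=\mft_\mfk$; (iii) if $\alpha_i\in B$ satisfies $\alpha_i\circ\sigma=\alpha_i$, then $\pi(\alpha_i)=\alpha_i$, so $\sigma'$ fixes $e_{\alpha_i}$ and thus acts as the identity on the one-dimensional root space $\mfg_{\alpha_i}$. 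This settles the first assertion; set $\mfk'=\mfg^{\sigma'}$.

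For the second assertion I would first observe that $\mft_\mfk$ is a maximal torus of the compact Lie algebra $\mfk'$: by Lemma \ref{lem:uniquetorus}, $\mft_\mfk$ contains $\mfg$-regular elements, so its centraliser in $\mfg$ equals $\mft_\mfg$, whence its centraliser in $\mfk'$ is $\mft_\mfg\cap\mfk'=\mft_\mfg^{\sigma'}=\mft_\mfk$, and a self-centralising toral subalgebra of a compact Lie algebra is a maximal torus. To determine the roots of $\mfk'$ relative to $\mft_\mfk$ I would complexify and use that no root of $\Delta_\mfg$ vanishes on $\mft_\mfk$: then the zero $\mft_\mfk$-weight space of $(\mfk')^\CC$ is $\mft_\mfk^\CC$, while for $\mu\neq 0$ the $\mu$-weight space is $\bigl(\bigoplus_{\alpha\in\Delta_\mfg,\ \alpha|_{\mft_\mfk}=\mu}\mfg_\alpha\bigr)^{\sigma'}$, the displayed sum being $\sigma'$-stable since $\sigma$ fixes $\mft_\mfk$, so $(\alpha\circ\sigma)|_{\mft_\mfk}=\alpha|_{\mft_\mfk}$. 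In particular every root of $\mfk'$ is a restriction $\alpha|_{\mft_\mfk}$ of some $\alpha\in\Delta_\mfg$. Conversely, each $\alpha_i|_{\mft_\mfk}$ with $\alpha_i\in B$ is a root of $\mfk'$: this is immediate from (iii) when $\alpha_i\circ\sigma=\alpha_i$, and when $\alpha_i\circ\sigma\neq\alpha_i$ the automorphism $\sigma'$ interchanges the two lines $\mfg_{\alpha_i}$ and $\mfg_{\alpha_i\circ\sigma}$, so its $(+1)$-eigenspace inside $\mfg_{\alpha_i}\oplus\mfg_{\alpha_i\circ\sigma}$ is one-dimensional and has $\mft_\mfk$-weight $\alpha_i|_{\mft_\mfk}$.

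It follows that $B|_{\mft_\mfk}$ is a linearly independent subset of the root system $\Delta_{\mfk'}$ of $\mfk'$ --- linearly independent because it is a basis of $\mft_\mfk^*$ --- and that every root of $\mfk'$, being a restriction $\alpha|_{\mft_\mfk}$ with $\alpha$ a $\ZZ_{\geq 0}$- or $\ZZ_{\leq 0}$-combination of $B$, is a $\ZZ_{\geq 0}$- or $\ZZ_{\leq 0}$-combination of $B|_{\mft_\mfk}$. By the standard characterisation of a simple system (a linearly independent subset of a root system consisting of roots, such that every root is a non-negative or non-positive integral combination of it, is a simple system; see \cite{Knapp}), this forces $B|_{\mft_\mfk}$ to be the simple system of $\Delta_{\mfk'}$ corresponding to the positivity ``$\geq 0$''. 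The step I expect to be the main obstacle is the very existence of $\sigma'$ as an \emph{involutive} automorphism of the \emph{compact} real form $\mfg$ (rather than just of $\mfg^\CC$) realising the prescribed permutation of $B$ with the normalisation $\sigma'(e_{\alpha_i})=e_{\pi(\alpha_i)}$: this rests on a compatible choice of structure constants in a Cartan--Weyl basis, which is always possible but is the substantive input, and for it I would appeal to the classical theory of automorphisms of compact semisimple Lie algebras --- this being, after all, exactly \cite[Proposition VII.3.4]{Loos}. Everything beyond that is a routine eigenspace computation together with standard root-system combinatorics, the only case distinction being whether a $\sigma$-orbit in $B$ has one or two elements and, in the latter case, whether $\alpha_i+\alpha_i\circ\sigma$ is a $\mfg$-root, as recorded in Lemma \ref{lem:rootlemma}.
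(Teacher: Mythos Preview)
The paper does not prove this proposition; it is quoted from \cite[Proposition VII.3.4]{Loos} and used as input, with no argument supplied. There is therefore no proof in the paper to compare your attempt against.

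On its own merits your sketch is the standard construction and is correct: realise $\sigma'$ as the pinned diagram automorphism attached to the permutation $\pi$ of $B$ induced by $\sigma$, verify your (i)--(iii), and then read off $\Delta_{\mfk'}$ from the $\sigma'$-invariants in the root space decomposition \eqref{eq:rootspacedecomp}, using that no $\mfg$-root vanishes on $\mft_\mfk$. The one point worth flagging is your closing remark that the existence of $\sigma'$ as an involution of the \emph{compact} form $\mfg$ is ``exactly \cite[Proposition VII.3.4]{Loos}'': taken at face value this is circular, since that is the very statement under discussion. What you actually need, and what you should cite instead, is the more primitive fact that any automorphism of the Dynkin diagram lifts to an automorphism of $\mfg^\CC$ fixing a chosen Chevalley system, and that such a lift can be arranged to commute with the conjugation defining the compact real form $\mfg\subset\mfg^\CC$ and hence restrict to an automorphism of $\mfg$; this is classical and can be found for instance in \cite{Helgason} or \cite{Knapp}. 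With that ingredient isolated, the rest of your argument is, as you say, routine.
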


The roots of $\mfk'$ relative to $\mft_\mfk$ are restrictions of certain (not necessarily all) elements in $\Delta_\mfg$ to $\mft_\mfk$; the restrictions of all elements in $B$ occur. See \cite[p.~129]{Loos} for the root space decomposition of $\mfk'$ with respect to $\mfk_\mfk$. Because the sub-root system of reduced elements in $\left.\Delta_\mfg\right|_{\mft_\mfk}$ and the root system of $\mfk'$ have the same simple roots, these reduced root systems coincide. In particular we obtain the following formula for $r$: 

\begin{prop} \label{prop:rasquotientofweylgroups} We have $r=\frac{|W(\mfk')|}{|W(\mfk)|}$.
\end{prop}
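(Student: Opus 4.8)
The plan is to assemble this formula from the pieces already in place. By the Corollary immediately preceding Proposition \ref{prop:loos}, we have $r = |W(\left.\Delta_\mfg\right|_{\mft_\mfk})|/|W(\mfk)|$, so it suffices to show that $|W(\left.\Delta_\mfg\right|_{\mft_\mfk})| = |W(\mfk')|$, i.e.\ that these two Weyl groups have the same order. First I would recall the general fact (cited from \cite[Lemma 2.91]{Knapp}) that in a possibly nonreduced root system the subset of \emph{reduced} elements — those $\beta$ for which $\frac12\beta$ is not also a root — forms a reduced root system with the same set of simple roots and the same Weyl group as the ambient system. Applying this to $\left.\Delta_\mfg\right|_{\mft_\mfk}$, whose simple roots are $\left.B\right|_{\mft_\mfk}$ by the Lemma preceding the Corollary, we get a reduced root system $R \subset \left.\Delta_\mfg\right|_{\mft_\mfk}$ with simple roots $\left.B\right|_{\mft_\mfk}$ and with $W(R) = W(\left.\Delta_\mfg\right|_{\mft_\mfk})$.

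Next I would bring in Proposition \ref{prop:loos}: the symmetric subalgebra $\mfk' = \mfg^{\sigma'}$ has, relative to the maximal abelian subalgebra $\mft_\mfk$, a root system $\Delta_{\mfk'}$ whose set of simple roots is exactly $\left.B\right|_{\mft_\mfk}$. Since $\Delta_{\mfk'}$ is the root system of a compact Lie algebra it is reduced, and a reduced root system is determined by its set of simple roots \cite[Proposition 2.66]{Knapp}. Therefore $R = \Delta_{\mfk'}$ as root systems, and in particular $W(R) = W(\Delta_{\mfk'}) = W(\mfk')$. Combining with the previous paragraph, $|W(\left.\Delta_\mfg\right|_{\mft_\mfk})| = |W(R)| = |W(\mfk')|$, and substituting into $r = |W(\left.\Delta_\mfg\right|_{\mft_\mfk})|/|W(\mfk)|$ yields $r = |W(\mfk')|/|W(\mfk)|$.

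The only genuinely delicate point is the identification $R = \Delta_{\mfk'}$: one must be sure that the roots of $\mfk'$ relative to $\mft_\mfk$ really do sit inside $\left.\Delta_\mfg\right|_{\mft_\mfk}$ (so that "same simple roots'' is a statement about subsets of one fixed vector space $\mft_\mfk^*$) rather than merely being abstractly isomorphic. This is exactly the content of the remark that the roots of $\mfk'$ relative to $\mft_\mfk$ are restrictions of certain elements of $\Delta_\mfg$, with the explicit root space decomposition in \cite[p.~129]{Loos}; once that is granted, "reduced root system determined by simple roots'' closes the argument. The rest is bookkeeping with the citations already marshalled in the paragraphs above.
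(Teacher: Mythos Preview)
Your argument is correct and follows essentially the same route as the paper: starting from the Corollary's formula $r=|W(\left.\Delta_\mfg\right|_{\mft_\mfk})|/|W(\mfk)|$, passing to the reduced sub-root system via \cite[Lemma 2.91]{Knapp}, and then identifying it with $\Delta_{\mfk'}$ by matching simple roots through Proposition~\ref{prop:loos} and \cite[Proposition 2.66]{Knapp}. You have also correctly flagged the one subtle point (that the roots of $\mfk'$ are genuine restrictions of $\mfg$-roots, so the comparison of simple roots takes place inside $\mft_\mfk^*$), which the paper handles in the same way by appeal to \cite[p.~129]{Loos}.
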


\begin{ex} If $\rk G=\rk K$, i.e., if $T_K$ is also a maximal torus of $G$, then the identity on $\mfg$ satisfies the conditions of Proposition \ref{prop:loos}. Hence $\mfk'=\mfg$ and the proposition says $r=\frac{|W(G)|}{|W(K)|}$. This however follows already from Lemma \ref{lem:T-fixedpoints}.
\end{ex}

\begin{ex} \label{ex:splitrank} If $G/K$ is a symmetric space of split rank, i.e., $\rk G=\rk K+\rk G/K$, then $\sigma$ itself satisfies the conditions of Proposition \ref{prop:loos}. In fact, let $\alpha\in B$ with $\alpha=\alpha\circ \sigma$. In this case $\alpha$ vanishes on $\mft_\mfp$, which implies that $\mfg_\alpha$ is contained either in $\mfk^\CC$ or in $\mfp^\CC$. But if it was contained in $\mfp^\CC$, then $[\mft_\mfp,\mfg_\alpha]=0$ and $[\mft_\mfp,\mfg_{-\alpha}]=0$, which would contradict the fact that $\mft_\mfp$ is maximal abelian in $\mfp$. Thus, we have  $r=1$ in the split rank case. Note that $r=1$ also follows from \cite[Lemma 13]{EMQ}, combined with Lemma \ref{lem:rindependentofGK}.
\end{ex}

\begin{ex} The symmetric space $G/K'$, where $K'$ is the connected subgroup of $G$ with Lie algebra $\mfk'$, is not always of split rank. Assume as in Remark \ref{rem:reduced} that there exists a root $\alpha\in \Delta_\mfg$ with $\alpha\circ \sigma\neq \alpha$ such that $\alpha+\alpha\circ \sigma\in \Delta_\mfg$. Let $X\in \mfg_\alpha$ be nonzero. Then $[X,\sigma'(X)]$ is a nonzero element in $\mfg_{\alpha+\alpha\circ \sigma}$. We have $\sigma'([X,\sigma'(X)])=-[X,\sigma'(X)]$, thus $[X,\sigma'(X)]\in \mfp'$, where $\mfp'$ is the $-1$-eigenspace of $\sigma'$.  By definition of $\sigma'$ we have $\mft_\mfp\subset \mfp'$, but $\mft_\mfp$ is not a maximal abelian subspace of $\mfp'$ because it commutes with $[X,\sigma'(X)]$. For example, in the case $\SU(2m+1)/\SO(2m+1)$ we have $K'=K$ although the space is not of split rank, see Subsection \ref{sssec:su2m+1} below.
\end{ex}

We will use below that the symmetric subalgebra $\mfk'$ can be determined via the Dynkin diagram of $G$: $\sigma$ defines an automorphism of the Dynkin diagram of $G$ (because it is a permutation group of $B$), which is nontrivial if and only if $\rk \mfg >\rk \mfk$. One can calculate the root system of $\mfk'$ via the fact that by Proposition \ref{prop:loos} the simple roots of $\mfk'$ are given by  $\left.B\right|_{\mft_\mfk}=\{\frac{1}{2}(\alpha_i+\alpha_i\circ \sigma)\mid i=1,\ldots,\rk G\}$.

\subsection{Reduction to the irreducible case}\label{sec:reduction}

\begin{lem} \label{lem:eqformalindependentofgroups} If $(G,K)$ and $(G',K')$ are two effective symmetric pairs of connected compact semisimple Lie groups associated to the same pair of Lie algebras $(\mfg,\mfk)$, then the $K$-action on $G/K$ is equivariantly formal if and only if the $K'$-action on $G'/K'$ is equivariantly formal.
\end{lem}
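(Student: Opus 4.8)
The plan is to reduce the statement to the numerical criterion for equivariant formality and then to check that both quantities entering it depend only on the pair of Lie algebras $(\mfg,\mfk)$, not on the chosen effective realization. Fix maximal tori $T_K\subset K$ and $T_{K'}\subset K'$. By Proposition~\ref{prop:eqformalequivalent}, the $K$-action on $G/K$ is equivariantly formal if and only if $\dim H^*(G/K)=\dim H^*((G/K)^{T_K})$, and likewise for $(G',K')$. Hence it suffices to prove that each of the two numbers $\dim H^*(G/K)$ and $\dim H^*((G/K)^{T_K})$ is an invariant of $(\mfg,\mfk)$.

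For the fixed-point side, Proposition~\ref{prop:cohomoffixedpointset} gives $\dim H^*((G/K)^{T_K})=2^{\rk G-\rk K}\cdot r$. Here $\rk G-\rk K=\rk\mfg-\rk\mfk$ is visibly determined by $(\mfg,\mfk)$, and by Lemma~\ref{lem:rindependentofGK} so is $r$, the number of compartments in a fixed $\mfk$-Weyl chamber. Thus $\dim H^*((G/K)^{T_K})$ depends only on $(\mfg,\mfk)$.

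For the cohomology of $M=G/K$ itself I would use that $G$ is compact and connected: averaging over $G$ shows that $G$-invariant differential forms compute $H^*(M;\RR)$, and evaluation at the base point identifies the complex of invariant forms with $(\Lambda\mfp^*)^{\mfk}$ equipped with the Chevalley--Eilenberg differential, i.e.\ with the relative Lie algebra cochain complex of $(\mfg,\mfk)$ (using the canonical $\mfk$-module identification $\mfg/\mfk\cong\mfp$, and that $K$ connected makes $K$-invariance the same as $\mfk$-invariance). Consequently $\dim H^*(G/K;\RR)=\dim H^*(\mfg,\mfk;\RR)$, which again depends only on $(\mfg,\mfk)$. (Alternatively: the simply connected symmetric space attached to $(\mfg,\mfk)$ covers every such $G/K$ by a normal covering whose deck group acts by left translations by central elements of the universal cover of $G$, hence trivially on real cohomology, so by transfer all such $G/K$ share the real cohomology of the simply connected model.)

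Assembling the pieces, the equality $\dim H^*(G/K)=\dim H^*((G/K)^{T_K})$ holds for $(G,K)$ precisely when the corresponding equality holds for $(G',K')$, and by Proposition~\ref{prop:eqformalequivalent} this is exactly the equivalence claimed. I do not expect a genuine obstacle here: the only input going beyond the results already established in this section is the classical fact that the real cohomology of $G/K$ with $G,K$ compact connected is computed by invariant forms and is therefore a Lie-algebraic invariant of $(\mfg,\mfk)$; the remainder is bookkeeping that isolates the Lie-algebra-theoretic content of the earlier propositions.
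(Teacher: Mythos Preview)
Your proof is correct and follows essentially the same approach as the paper: both reduce to the numerical criterion in Proposition~\ref{prop:eqformalequivalent}, invoke Proposition~\ref{prop:cohomoffixedpointset} and Lemma~\ref{lem:rindependentofGK} for the fixed-point side, and appeal to the invariant-forms description of $H^*(G/K;\RR)$ for the other side. The only cosmetic difference is that the paper cites the symmetric-space fact that $H^*(G/K)\cong(\Lambda\mfp^*)^{\mfk}$ directly (the differential vanishes on invariants), whereas you phrase it as relative Lie algebra cohomology; either way the conclusion is the same.
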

\begin{proof} Because $K$ and $K'$ are connected, both $H^*(G/K)$ and $H^*(G'/K')$ are given as the $\RR$-algebra of $\mfk$-invariant elements in $\Lambda^* \mfp$, see \cite[Theorem 8.5.8]{Wolf}. In particular $\dim H^*(G/K)=\dim H^*(G'/K')$. Choosing maximal tori $T\subset K$ and $T'\subset K'$, we furthermore know from Propositions \ref{prop:cohomoffixedpointset} and \ref{lem:rindependentofGK} that $\dim H^*((G/K)^T)=\dim H^*((G'/K')^{T'})$ because $(G,K)$ and $(G',K')$ correspond to the same Lie algebra pair.  The statement then follows from Proposition \ref{prop:eqformalequivalent}.
\end{proof}
\begin{lem} \label{lem:eqformalproduct}
Given actions of compact connected Lie groups $K_i$ on compact manifolds $M_i$ ($i=1\ldots n$), then the $K_1\times\ldots \times K_n$-action on $M_1\times \ldots \times M_n$ is equivariantly formal if and only if all the $K_i$-actions on $M_i$ are equivariantly formal.
\end{lem}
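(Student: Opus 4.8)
The plan is to reduce everything to the Künneth formula together with the cohomological characterizations of equivariant formality collected in Proposition~\ref{prop:eqformalequivalent}. The key point is that equivariant formality of an action of a compact connected Lie group $K$ on a compact manifold $M$ is, by part $(4)$ of that proposition, equivalent to the numerical identity $\dim H^*(M)=\dim H^*(M^T)$ for a maximal torus $T\subset K$, and both sides of this identity behave multiplicatively under products.

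First I would fix maximal tori $T_i\subset K_i$, so that $T=T_1\times\cdots\times T_n$ is a maximal torus of $K=K_1\times\cdots\times K_n$. The fixed point set of $T$ acting on $M=M_1\times\cdots\times M_n$ is then the product $(M_1)^{T_1}\times\cdots\times (M_n)^{T_n}$, since an element $(x_1,\ldots,x_n)$ is fixed by all of $T$ precisely when each $x_i$ is fixed by $T_i$. By the Künneth formula (we use real coefficients, so there are no Tor terms) we get
\[
\dim H^*(M)=\prod_{i=1}^n \dim H^*(M_i),\qquad \dim H^*(M^T)=\prod_{i=1}^n \dim H^*((M_i)^{T_i}).
\]
Now by the remark following Proposition~\ref{prop:eqformalequivalent} we always have $\dim H^*((M_i)^{T_i})\le \dim H^*(M_i)$ for each $i$, with equality for a given $i$ if and only if the $K_i$-action on $M_i$ is equivariantly formal. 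Comparing the two products, the identity $\dim H^*(M^T)=\dim H^*(M)$ holds if and only if equality holds in every factor, i.e. if and only if each $K_i$-action on $M_i$ is equivariantly formal. Invoking part $(4)$ of Proposition~\ref{prop:eqformalequivalent} once more — in the direction that makes $\dim H^*(M^T)=\dim H^*(M)$ equivalent to equivariant formality of the $K$-action on $M$ — completes the argument.

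There is no real obstacle here; the only points requiring any care are the identification of the fixed point set of a product torus action with the product of the fixed point sets, and the observation that because every factor $\dim H^*((M_i)^{T_i})$ is bounded above by $\dim H^*(M_i)$ and all quantities are positive, a product of such inequalities is an equality exactly when each individual inequality is. Since all the cohomology groups in sight are finite-dimensional and nonzero (each $M_i$ is a nonempty compact manifold, and equivariantly formal torus actions have nonempty fixed point sets), dividing through to isolate a single factor is legitimate.
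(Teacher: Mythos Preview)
Your argument is correct and follows essentially the same route as the paper: choose maximal tori $T_i\subset K_i$, observe that $T=T_1\times\cdots\times T_n$ is a maximal torus of the product with $M^T=\prod_i (M_i)^{T_i}$, and apply condition~$(4)$ of Proposition~\ref{prop:eqformalequivalent}. You have simply spelled out in more detail (K\"unneth, the factorwise inequality, positivity) what the paper compresses into a single sentence.
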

\begin{proof} Choose maximal tori $T_i\subset K_i$. Then $T_1\times \ldots \times T_n$ is a maximal torus in $K_1\times\ldots \times K_n$. The claim follows from Proposition \ref{prop:eqformalequivalent} because the $T_1\times \ldots \times T_n$-fixed point set is exactly the product of the $T_i$-fixed point sets.
\end{proof}

Lemmas \ref{lem:eqformalindependentofgroups} and \ref{lem:eqformalproduct} imply that for proving Theorem \ref{thm:main} it suffices to check it for effective symmetric pairs $(G,K)$ of compact connected Lie groups such that $G/K$ is an irreducible simply-connected symmetric space of compact type. Below we will make use of the classification of such spaces, see \cite{Helgason}.

\subsection{Lie groups}\label{sec:groups}
Given a compact connected Lie group $G$, the product $G\times G$ acts on $G$ via $(g_1,g_2)\cdot g=g_1gg_2^{-1}$. The isotropy group of the identity element is the diagonal $D(G)\subset G\times G$. In the language of Helgason \cite{Helgason}, we obtain an  irreducible  symmetric pair $(G\times G,D(G))$ of type II. The $D(G)$-action on $(G\times G)/D(G)$ is nothing but the action of $G$ on itself by conjugation. But for any compact connected Lie group, the action on itself by conjugation is equivariantly formal. In fact, if $T\subset G$ is a maximal torus, then the fixed point set of the $T$-action, $G^T$, is $T$ itself, and thus $\dim H^*(G^T)=\dim H^*(T)=2^{\rk G}=\dim H^*(G)$. For other ways to prove that this action is equivariantly formal see \cite[Example 4.6]{GR}. For instance, equivariant formality would also follow from Proposition \ref{prop:splitrankequivformal} below as $(G\times G,D(G))$ is of split rank.

\subsection{Inner symmetric spaces}\label{sec:inner}

Consider the case that the symmetric space $G/K$ of compact type is inner, i.e., that the involution $\sigma$ is inner. By \cite[Theorem IX.5.6]{Helgason} this is the case if and only if $\rk G=\rk K$.  Hence, a maximal torus $T_K\subset K$ is also a maximal torus in $G$, and the $T_K$-fixed point set is by Lemma \ref{lem:T-fixedpoints} a finite set of cardinality $\frac{|W(G)|}{|W(K)|}$. 
Because of the following classical result (see for example \cite[Chapter XI, Theorem VII]{Greub}), the case of inner symmetric spaces is easy to deal with.
\begin{prop} Given any compact connected Lie groups $K\subset G$, the following conditions are equivalent:
\begin{enumerate}
\item $\rk G=\rk K$.
\item $\chi(G/K)>0$.
\item $H^{odd}(G/K)=0$.
\end{enumerate}
\end{prop}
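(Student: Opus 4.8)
The plan is to prove the cyclic chain $(3)\Rightarrow(2)\Rightarrow(1)\Rightarrow(3)$. The implication $(3)\Rightarrow(2)$ is immediate: if $H^{odd}(G/K;\RR)=0$ then $\chi(G/K)=\sum_{i\text{ even}}\dim H^i(G/K;\RR)\ge\dim H^0(G/K;\RR)=1$, since $G/K$ is connected. So the content lies in the other two implications.

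For $(2)\Rightarrow(1)$ I would argue by contraposition. Suppose $\rk K<\rk G$, fix a maximal torus $T_G\subset G$, and choose a generic element $X$ of its Lie algebra so that $\overline{\exp\RR X}=T_G$. The vector field on $G/K$ generated by the (left) $\exp(\RR X)$-action vanishes at $gK$ precisely when $gK$ is fixed by all of $T_G$, i.e.\ when $g^{-1}T_Gg\subseteq K$; this never occurs because $\rk K<\rk G$. Hence $G/K$ carries a nowhere-vanishing vector field, so $\chi(G/K)=0$ by the Poincar\'e--Hopf theorem, and $(2)$ fails.

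The substantive step is $(1)\Rightarrow(3)$. Assume $\rk G=\rk K$ and fix a maximal torus $T\subset K$, which is then maximal in $G$ as well, so $W(K)\subseteq W(G)$. Consider the fibre bundle $K/T\hookrightarrow G/T\xrightarrow{p}G/K$ and combine three classical facts. (i) $G/T$ and $K/T$ are the flag manifolds of $G^\CC$ and $K^\CC$, so they admit cell decompositions with cells only in even dimensions; in particular $H^{odd}(G/T;\RR)=H^{odd}(K/T;\RR)=0$, and by Borel's theorem $H^*(G/T;\RR)\cong\mathrm{Sym}(\mft^*)/(I_G)$ and $H^*(K/T;\RR)\cong\mathrm{Sym}(\mft^*)/(I_K)$, where $I_G$, resp.\ $I_K$, is the ideal generated by the $W(G)$-invariant, resp.\ $W(K)$-invariant, polynomials of positive degree. (ii) $\pi_1(G/K)=0$: the space $G/T$ is simply connected and $\pi_1(G/K)$ is a quotient of it via $p$; hence the Serre spectral sequence of $p$ has $E_2^{s,t}=H^s(G/K;\RR)\otimes H^t(K/T;\RR)$. (iii) Since $W(K)\subseteq W(G)$ we have $I_G\subseteq I_K$, so the restriction map $H^*(G/T;\RR)\to H^*(K/T;\RR)$ is the canonical surjection $\mathrm{Sym}(\mft^*)/(I_G)\twoheadrightarrow\mathrm{Sym}(\mft^*)/(I_K)$. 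By (iii) a basis of $H^*(K/T;\RR)$ lifts to $H^*(G/T;\RR)$, so Leray--Hirsch applies to $p$: the spectral sequence collapses at $E_2$ and $H^*(G/T;\RR)\cong H^*(G/K;\RR)\otimes H^*(K/T;\RR)$. Since the left-hand side and $H^*(K/T;\RR)\ne 0$ are both concentrated in even degrees, this forces $H^{odd}(G/K;\RR)=0$.

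The main obstacle is exactly this last step: the three ingredients are individually standard, but putting them together --- in particular verifying the surjectivity in (iii), equivalently that the fibre cohomology of $p$ extends over the total space so that Leray--Hirsch is available --- is where the work lies, whereas the other two implications are essentially formal. As a byproduct the collapse gives $\dim H^*(G/K;\RR)=\dim H^*(G/T;\RR)/\dim H^*(K/T;\RR)=|W(G)|/|W(K)|$, and together with $H^{odd}(G/K)=0$ this recovers the Hopf--Samelson formula $\chi(G/K)=|W(G)|/|W(K)|$ in the equal-rank case; alternatively, $\chi(G/K)=\chi((G/K)^T)=|W(G)|/|W(K)|$ follows directly from Lemma \ref{lem:T-fixedpoints}.
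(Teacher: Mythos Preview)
The paper does not supply its own proof of this proposition; it is quoted as a classical result with a reference to \cite[Chapter XI, Theorem VII]{Greub}. So there is no argument in the paper to compare yours against.

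Your cyclic proof is correct and is one of the standard routes to this result. A few small comments. In (ii) your sentence ``$\pi_1(G/K)$ is a quotient of it via $p$'' is garbled; what you mean is that from the homotopy exact sequence of $K/T\hookrightarrow G/T\to G/K$, connectedness of $K/T$ gives a surjection $\pi_1(G/T)\twoheadrightarrow\pi_1(G/K)$, and $\pi_1(G/T)=0$ because $\pi_1(T)\to\pi_1(G)$ is onto. In (iii) the only point that deserves a word of justification is that the fibre-inclusion map $H^*(G/T)\to H^*(K/T)$ really is the algebraic quotient $\mathrm{Sym}(\mft^*)/I_G\to \mathrm{Sym}(\mft^*)/I_K$; this follows because both rings are generated in degree~$2$ by the Chern classes of the line bundles associated to characters of $T$, and these pull back compatibly along $K/T\hookrightarrow G/T$. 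Once that is said, Leray--Hirsch applies and the parity argument finishes $(1)\Rightarrow(3)$ exactly as you wrote. Your closing remark recovering $\chi(G/K)=|W(G)|/|W(K)|$ via Lemma~\ref{lem:T-fixedpoints} is a nice consistency check with the rest of the paper.
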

It follows from Proposition \ref{prop:hoddeqformal} that the $K$-action on a homogeneous space $G/K$ with $\rk G=\rk K$ is always equivariantly formal. Alternatively, \cite[Corollary 4.5]{GR} implies that the $G$-action on $G/K$ is equivariantly formal because all its isotropy groups have rank equal to the rank of $G$. Then by Corollary \ref{cor:subgroupseqformal} any closed subgroup of $G$ acts equivariantly formally on $G/K$. 

\begin{prop} If $\rk G=\rk K$, then the $K$-action on $G/K$ is equivariantly formal. If $T_K\subset K$ is a maximal torus, then the fixed point set of the induced $T_K$-action consists of exactly $\dim H^*(G/K)=\frac{|W(G)|}{|W(K)|}$ points.
\end{prop}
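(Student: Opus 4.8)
The plan is to assemble the statement from the ingredients already established. For the first assertion, equivariant formality of the $K$-action, I would simply combine the classical Proposition just stated, which gives $H^{odd}(G/K)=0$ under the hypothesis $\rk G=\rk K$, with Proposition \ref{prop:hoddeqformal}, which guarantees equivariant formality of any action on a compact manifold with vanishing odd cohomology. (As noted in the text, one could alternatively invoke \cite[Corollary 4.5]{GR} to get equivariant formality of the $G$-action and then descend to $K$ via Corollary \ref{cor:subgroupseqformal}; either route works.)

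For the description of the fixed point set, I would first observe that since $T_K\subset K$ is a maximal torus and $\rk G=\rk K$, a dimension count shows that $T_K$ is in fact a maximal torus of $G$ as well (it is contained in some maximal torus of $G$, which must then have the same dimension). Consequently $N_G(T_K)/T_K=W(G)$ and $N_K(T_K)/T_K=W(K)$, and Lemma \ref{lem:T-fixedpoints} gives
\[
M^{T_K}=N_G(T_K)/N_K(T_K),
\]
a set of cardinality $|W(G)|/|W(K)|$; by Proposition \ref{prop:conncomponentsoffixedpoints} each of its connected components is a torus of dimension $\rk G-\rk K=0$, i.e.\ a point, so $M^{T_K}$ is indeed a finite set of $r=|W(G)|/|W(K)|$ points. (Equivalently, $r=|W(G)|/|W(K)|$ follows from the Example immediately preceding Subsection \ref{sec:reduction} applied to $\mfk'=\mfg$.)

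Finally I would read off $\dim H^*(G/K)$ from equivariant formality: by condition $(4)$ of Proposition \ref{prop:eqformalequivalent} we have $\dim H^*(M)=\dim H^*(M^{T_K})$, and since $M^{T_K}$ consists of $r$ points, $\dim H^*(M^{T_K})=r=|W(G)|/|W(K)|$; this also agrees with Proposition \ref{prop:cohomoffixedpointset}, which gives $\dim H^*(M^{T_K})=2^{\rk G-\rk K}\cdot r=r$. There is no real obstacle here — the proposition is a direct consequence of the earlier results — the only point requiring a word of care is the elementary observation that equal ranks force $T_K$ to be maximal in $G$, so that the Weyl group quotient interpretation of Lemma \ref{lem:T-fixedpoints} applies.
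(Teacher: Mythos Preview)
Your proposal is correct and mirrors the paper's own argument: the paper likewise obtains equivariant formality from $H^{odd}(G/K)=0$ via Proposition~\ref{prop:hoddeqformal} (mentioning the alternative route through \cite[Corollary 4.5]{GR} and Corollary~\ref{cor:subgroupseqformal}), and identifies the fixed set using Lemma~\ref{lem:T-fixedpoints} together with the observation that $T_K$ is maximal in $G$. Your explicit derivation of $\dim H^*(G/K)=|W(G)|/|W(K)|$ from condition~(4) of Proposition~\ref{prop:eqformalequivalent} makes a step explicit that the paper leaves implicit, but the approach is the same.
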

\begin{rem} This is not a new result. For an investigation of the (algebra structure of the) equivariant cohomology of homogeneous spaces $G/K$ with $\rk G =\rk K$ see \cite{GHZ}, or \cite[Section 5]{HolmSjamaar} for an emphasis on other coefficient rings.
\end{rem}

\subsection{Spaces of split rank}\label{sec:split}

Also when $G/K$ is of split rank, i.e., $\rk G=\rk K + \rk G/K$, there is a general argument that implies equivariant formality of the $K$-action on $G/K$.
\begin{prop} \label{prop:splitrankequivformal} If $G/K$ is of split rank, then the natural $K$-action on $G/K$ is equivariantly formal.
\end{prop}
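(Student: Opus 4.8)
The plan is to reduce to the criterion already used in Subsection~\ref{sec:inner}: I would show that when $G/K$ is of split rank, \emph{every} isotropy group of the $K$-action on $M=G/K$ has rank equal to $\rk K$, and then invoke \cite[Corollary 4.5]{GR} exactly as there, now with the acting group $K$ in place of $G$. Equivalently, this amounts to showing that every point of $M$ is fixed by some maximal torus of $K$, i.e.\ that $M=K\cdot M^{T_K}$, where $T_K\subset K$ is the maximal torus and $M^{T_K}$ its fixed point set from Subsection~\ref{sec:fixed set}.

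To establish $M=K\cdot M^{T_K}$ I would use the structure theory. Every point of the compact symmetric space $M$ has the form $\exp(X)K$ with $X\in\mfp$, since the exponential map is surjective, and every $X\in\mfp$ is $\Ad(k)$-conjugate (for some $k\in K$) into the abelian subalgebra $\mft_\mfp\subset\mfp$; in the split rank case $\mft_\mfp$ is in fact maximal abelian in $\mfp$, since $\dim\mft_\mfp=\rk G-\rk K=\rk G/K$ (this is the observation used in Example~\ref{ex:splitrank}). Writing such a point as $\exp(\Ad(k)H)K=k\exp(H)k^{-1}K=k\exp(H)K$ with $k\in K$ and $H\in\mft_\mfp$, it suffices to check that $\exp(H)K\in M^{T_K}$. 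Now $\mft_\mfp$ and $\mft_\mfk$ both lie in the abelian Lie algebra $\mft_\mfg$, so $\exp(H)\in T_G$ centralizes $T_K$; hence $\exp(H)^{-1}T_K\exp(H)=T_K\subset K$, which by the argument in the proof of Lemma~\ref{lem:T-fixedpoints} is exactly the condition for $\exp(H)K$ to be a $T_K$-fixed point. The isotropy group of $\exp(H)K$ therefore contains $T_K$, the isotropy group of $k\exp(H)K$ contains the maximal torus $kT_Kk^{-1}$ of $K$, and so $M=K\cdot M^{T_K}$. Applying \cite[Corollary 4.5]{GR} then concludes the proof.

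The argument is short and the structure theory it uses is entirely standard; the one point to be careful about is that the equivariant formality criterion ``all isotropy groups of maximal rank implies equivariantly formal'' from \cite{GR} must be available for an arbitrary action, in particular for the $K$-action on the manifold $G/K$, which is not $K$-homogeneous --- I expect this to be precisely the form of \cite[Corollary 4.5]{GR}, given its use in Subsection~\ref{sec:inner}. Should one wish to avoid that criterion altogether, an alternative is to observe that Example~\ref{ex:splitrank} gives $r=1$, hence $\dim H^*(M^{T_K})=2^{\rk G-\rk K}$ by Proposition~\ref{prop:cohomoffixedpointset}, and then to invoke the classical fact that a symmetric space of compact type of split rank has the real cohomology of a product of $\rk G/K$ odd-dimensional spheres, so that $\dim H^*(M)=2^{\rk G/K}=2^{\rk G-\rk K}=\dim H^*(M^{T_K})$ and equivariant formality follows from Proposition~\ref{prop:eqformalequivalent}. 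Either way, no appeal to the classification of symmetric spaces is needed for this case.
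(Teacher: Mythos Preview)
Your argument is correct and is essentially the paper's own proof: both show that every $K$-isotropy group contains a maximal torus of $K$ by moving an arbitrary point via $K$ into $\exp(\mfa)$ for a maximal abelian $\mfa\subset\mfp$ that commutes with $\mft_\mfk$, and then invoke \cite[Corollary~4.5]{GR}. The only cosmetic difference is that you take $\mfa=\mft_\mfp$ from the outset (noting $\dim\mft_\mfp=\rk G/K$ in the split-rank case), while the paper starts with an arbitrary maximal abelian $\mfa$ and observes that a maximal torus $T_K$ with $\mft_\mfk\oplus\mfa$ abelian exists; your alternative route via $r=1$ and $\dim H^*(M)=2^{\rk G/K}$ is also valid, but note that in the paper the latter equality is stated as a \emph{consequence} of equivariant formality rather than as an input, so one would need an independent reference for it.
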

\begin{proof} We will show that every $K$-isotropy algebra  has maximal rank, i.e., rank equal to $\rk \mfk$. Then equivariant formality follows from \cite[Corollary 4.5]{GR}. 

Consider the decomposition $\mfg=\mfk\oplus \mfp$ and choose any $\Ad_K$-invariant scalar product on $\mfp$ that turns $G/K$ into a Riemannian symmetric space. Then we have an exponential map $\exp:\mfp\to G/K$, and it is known that every orbit of the $K$-action on $G/K$ meets $\exp(\mfa)$, where $\mfa$ is a maximal abelian subalgebra of $\mfp$. Because $G/K$ is of split rank, there is a maximal torus $T_K\subset K$ such that $\mft_\mfk \oplus \mfa$ is abelian. The torus $T_K$ acts trivially on $\exp(\mfa)$. Thus, the $K$-isotropy algebra of any point in $\exp(\mfa)$ (and hence of any point in $M$) has maximal rank.
\end{proof}

In the split-rank case we have $r=1$ by Example \ref{ex:splitrank}. We thus have

\begin{prop} If $G/K$ is of split rank then $\dim H^*(G/K)=2^{\rk G/K}$. If $T_K\subset K$ is a maximal torus, then the fixed point set of the induced $T_K$-action on $G/K$ is a $\rk G/K$-dimensional torus (in particular connected).
\end{prop}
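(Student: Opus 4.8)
The plan is to simply assemble the facts already established. First I would record that "split rank" means precisely $\rk G - \rk K = \rk G/K$, so Proposition \ref{prop:conncomponentsoffixedpoints} tells us that each connected component of $M^{T_K}$ is a torus of dimension $\rk G/K$. The number $r$ of such components was computed in Example \ref{ex:splitrank} to be $1$ in the split rank case. Hence $M^{T_K}$ consists of a single $\rk G/K$-dimensional torus; in particular it is connected. This is exactly the second assertion.

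For the first assertion I would combine two inputs. On the one hand, from the description of $M^{T_K}$ just obtained (equivalently, from Proposition \ref{prop:cohomoffixedpointset} with $r=1$), we get $\dim H^*(M^{T_K}) = 2^{\rk G/K}$. On the other hand, Proposition \ref{prop:splitrankequivformal} says the $K$-action on $G/K$ is equivariantly formal. By the characterization of equivariant formality in item $(4)$ of Proposition \ref{prop:eqformalequivalent}, this forces $\dim H^*(G/K) = \dim H^*(M^{T_K})$, and therefore $\dim H^*(G/K) = 2^{\rk G/K}$.

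There is essentially no obstacle here: the present statement is a bookkeeping consequence of earlier results. The two genuine inputs — that $r=1$ in the split rank case (Example \ref{ex:splitrank}, which ultimately rests on $\mft_\mfp$ being maximal abelian in $\mfp$) and that the isotropy action is equivariantly formal (Proposition \ref{prop:splitrankequivformal}, which rests on all $K$-isotropy algebras having maximal rank) — have already been proven, so nothing further is needed beyond citing Propositions \ref{prop:conncomponentsoffixedpoints}, \ref{prop:cohomoffixedpointset}, \ref{prop:eqformalequivalent}, \ref{prop:splitrankequivformal} and Example \ref{ex:splitrank}.
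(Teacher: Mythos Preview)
Your proposal is correct and matches the paper's approach exactly: the paper states the proposition immediately after noting that $r=1$ in the split rank case (Example \ref{ex:splitrank}) and that the action is equivariantly formal (Proposition \ref{prop:splitrankequivformal}), with the implicit understanding that Propositions \ref{prop:conncomponentsoffixedpoints}, \ref{prop:cohomoffixedpointset}, and \ref{prop:eqformalequivalent}(4) then yield both claims. You have simply made these citations explicit.
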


\subsection{Outer symmetric spaces which are not of split rank}
For the remaining cases that are not covered by any of the arguments above, i.e., irreducible simply-connected symmetric spaces of type I that are neither of equal nor of split rank, we do not have a general argument for equivariant formality of the isotropy action. Using the classification of symmetric spaces \cite[p.~518]{Helgason}, we calculate for each of these spaces the dimension of the cohomology of the $T_K$-fixed point set and show that it coincides with the dimension of the cohomology of $G/K$ (which we take from the literature), upon which we conclude equivariant formality via Proposition \ref{prop:eqformalequivalent}. Fortunately, there are only three (series of) such symmetric spaces, namely
\[
\SU(n)/\SO(n),\quad \SO(2p+2q+2)/\SO(2p+1)\times \SO(2q+1), \text{ and }  E_6/\PSp(4),
\]
where $n\geq 4$ and $p,q\geq 1$. We have shown with Propositions \ref{prop:cohomoffixedpointset} and \ref{prop:rasquotientofweylgroups} that
\[
\dim H^*((G/K)^{T_K})=2^{\rk \mfg-\rk \mfk} \cdot \frac{|W(\mfk')|}{|W(\mfk)|},
\]
where the symmetric subalgebra $\mfk'\subset \mfg$ was introduced in Proposition \ref{prop:loos}. Because in this section we are dealing with outer symmetric spaces, we have $\rk \mfg>\rk \mfk$, so $\mfk'\neq \mfg$ is a symmetric subgroup of $\mfg$. The orders of the appearing Weyl groups are listed in \cite[p.~66]{Humphreys}.

\subsubsection{$\SU(2m)/\SO(2m)$} Let $M=\SU(2m)/\SO(2m)$, where $m\geq 2$, and $T\subset \SO(2m)$ be a maximal torus. The only connected symmetric subgroup of $\SU(2m)$ of rank $m$ different from $\SO(2m)$ is $\Sp(m)$. The fact that $\mfk'={\mathfrak{sp}}(m)$ can be visualized via the Dynkin diagrams: the involution $\sigma$ fixes only the middle root of the Dynkin diagram $A_{2m-1}$ of $\SU(2m)$. Hence, after restricting, the middle root becomes a root which is longer than the other roots, and only in $C_m$ there exists a root longer than the others, not in $D_m$.
\begin{center}
\includegraphics{dynkinAoddToCn} 
\end{center}
We thus may calculate
\[
r= \frac{|W(C_m)|}{|W(D_m)|} =\frac{2^m\cdot m!}{2^{m-1}\cdot m!}=2;
\]
note that for this example the number of compartments was also calculated in \cite[p.~11]{EMQ}.
It is known that $\dim H^*(M)=2^m$ (see for example \cite[p.~493]{Greub} or \cite[Theorem III.6.7.(2)]{Mimura}), hence
\[
\dim H^*(M^T)=2^{2m-1-m}\cdot r =2^m =\dim H^*(M).
\]
Thus, the action is equivariantly formal.

\subsubsection{$\SU(2m+1)/\SO(2m+1)$} \label{sssec:su2m+1} Let $M=\SU(2m+1)/\SO(2m+1)$, where $m\geq 2$, and $T\subset \SO(2m+1)$ be a maximal torus. It is known that $\dim H^*(M)=2^m$ (see for example \cite[p.~493]{Greub} or \cite[Theorem III.6.7.(2)]{Mimura}), hence
\[
2^m\cdot r = \dim H^*(M^T)\leq \dim H^*(M)=2^m
\]
for some natural number $r$. Thus necessarily $r=1$ (in fact $\mfk'={\mathfrak{so}}(2m+1)$) and the action is equivariantly formal. Note that this space is also listed as an exception in \cite{EMQ} as it is the only outer symmetric space which is not of split rank such that the corresponding involution fixes no root in the Dynkin diagram (and hence every compartment is a $K$-Weyl chamber).

\subsubsection{$\SO(2p+2q+2)/\SO(2p+1)\times \SO(2q+1)$} Let $M=\SO(2p+2q+2)/\SO(2p+1)\times \SO(2q+1)$, where $p,q\geq 1$, and $T\subset \SO(2p+1)\times \SO(2q+1)$ be a maximal torus. The only connected symmetric subgroups of $\SO(2p+2q+2)$ of rank $p+q$ are $\SO(2p'+1)\times \SO(2q'+1)$, where $p'+q'=p+q$. The involution $\sigma$ fixes all roots of the Dynkin diagram  $D_{p+q+1}$ of $\SO(2p+2q+2)$ but two; after restricting, these two become a single root which is shorter than the others. Because $A_{p+q-1}\oplus A_1$ and $D_{p+q}$ do not appear as the Dynkin diagram of any of the possible symmetric subgroups, the Dynkin diagram of $\mfk'$ is forced to be $B_{p+q}$, which means that $\mfk'={\mathfrak{so}}(2p+2q+1)$.
\begin{center}
\includegraphics{dynkinDnToBn} 
\end{center}
We thus have 
\[
r=\frac{|W(B_{p+q})|}{|W(B_p)|\cdot |W(B_q)|}=\frac{2^{p+q}\cdot (p+q)!}{2^p \cdot p!\cdot 2^q\cdot q!} = {p+q \choose p}.
\]
By \cite[p.~496]{Greub} we have $\dim H^*(M)=2\cdot {p+q\choose p}$, and it follows that the action is equivariantly formal because of
\begin{align*}
\dim H^*(M^T)&=2^{p+q+1-p-q}\cdot r =2\cdot {p+q \choose p} = \dim H^*(M).
\end{align*}
\subsubsection{$E_6/\PSp(4)$} Let $M=E_6/\PSp(4)$ and $T\subset \PSp(4)$ be a maximal torus. The only symmetric subalgebra of $\mfe_6$ of rank $4$ different from ${\mathfrak{sp}}(4)$ is $\mff_4$.
\begin{center}
\includegraphics{dynkinE6modToF4} 
\end{center}
We obtain
\[
r=\frac{|W(F_4)|}{|W(C_4)|}=\frac{2^7 \cdot 3^2}{2^4 \cdot 4!}=3.
\]
It is shown in \cite{Takeuchi} that $\dim H^*(M)=12$. Thus, 
\[
\dim H^*(M^T)=2^{6-4}\cdot r = 2^2\cdot 3 = 12 = \dim H^*(M)
\]
shows that the action is equivariantly formal.

\end{document}